\theoremstyle{plain}
\newtheorem{thm}{Theorem}
\newtheorem{prop}[thm]{Proposition}
\newtheorem{lem}[thm]{Lemma}
\newtheorem{cor}[thm]{Corollary}
\theoremstyle{definition}
\newtheorem{dfn}[thm]{Definition}
\newtheorem{rem}[thm]{Remark}
\newtheorem{note}[thm]{Note}
\title{\bf
A Constructive Approach to Infinitesimal Conformal Rigidity on Complex Hyperbolic Space}
\author{Hiroyasu Satoh and Hemangi Madhusudan Shah}
\date{May 2025}
\begin{document}

\maketitle

\begin{abstract}

We prove that every conformal vector field on the complex hyperbolic space $\mathbb{C}H^n$ is Killing for all $n\ge 2$.
Although this rigidity is classically known, our proof is entirely different in nature: it is local, analytic, and fully constructive.
Our approach is local, analytic, and constructive: we view $\mathbb{C}H^2$ through its solvable Lie group model and express the conformal Killing equation as an explicit system of partial differential equations.
By solving this system completely, we show that any conformal vector field must be determined by a Killing field.
The analysis in complex dimension $2$ naturally extends to arbitrary $n$, yielding a unified and fully explicit proof of this rigidity phenomenon.
\end{abstract}

\begin{itemize}[leftmargin=73pt]
    \item[{\bf Keywords:}] Complex hyperbolic space; 
Conformal vector field;\\  Killing vector field; Damek-Ricci space
\end{itemize}

\indent
{\bf MSC 2020 Classification:} 53C24; 53C25; 53C35


\section{Introduction}

The study of conformal vector fields is a classical and fundamental topic in Riemannian geometry. It is well known that the standard space forms $\mathbb{R}^n$, $\mathbb{S}^n$, and $\mathbb{R}H^n$ are distinguished by the richness of their conformal vector fields.
In particular, Tashiro \cite{Tashiro1965} showed that the existence of certain conformal or concircular vector fields imposes strong geometric restrictions on the underlying manifold, and in some cases, completely characterizes the global geometry.
Lichnerowicz \cite{L, L-F} conjectured that every complete Riemannian manifold whose conformal group is \emph{essential} must be conformally equivalent to either the Euclidean space or the standard sphere.
For complete Einstein manifolds, the existence of a conformal vector field satisfying an appropriate non-triviality condition (see the two cases treated in \cite{YanoNagano51}) forces the metric to be isometric to either the Euclidean space or the standard sphere.
The general case (with neither Einstein nor compactness assumptions) was later established by Lelong–Ferrand, completing and correcting an earlier approach of Alekseevski\u{i}; see \cite{L-F, Alekseevskii72} for a concise historical overview.
The nonexistence result we establish in this paper (see Theorem~\ref{thm:main} below) 
provides a direct, local PDE proof of a related rigidity phenomenon, obtained without appealing to global splitting arguments or group actions.

In this article, we focus on the complex hyperbolic space $\mathbb{C}H^n(-1)$, which is a noncompact, rank-one symmetric space and also a special example of a Damek-Ricci space of constant holomorphic sectional curvature $-1$.
In the sequel, $\mathbb{C}H^n$ would mean
$\mathbb{C}H^n(-1)$.
We prove that all conformal vector fields on $\mathbb{C}H^n$ are Killing, thus exhibiting a rigidity phenomenon in contrast to the behavior seen in other space forms.

\begin{thm}\label{thm:main}
Let $\xi$ be a conformal vector field on the complex hyperbolic space $\mathbb{C}H^n$ for $n \geq 2$. Then $\xi$ is a Killing vector field.
\end{thm}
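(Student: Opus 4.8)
The plan is to reduce the conformal Killing equation on $\mathbb{C}H^n$ to a single overdetermined scalar equation for the conformal factor, and then to show, working in the explicit Damek--Ricci (solvable Lie group) model of $\mathbb{C}H^n$, that this equation forces the factor to vanish identically; once the factor is zero, $\xi$ is Killing by definition. The argument is entirely local: no completeness, compactness, or group--action input is used.

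First I would write the conformal condition as $\mathcal{L}_\xi g = 2\sigma g$ with $\sigma = \tfrac{1}{2n}\operatorname{div}\xi$, and invoke that $\mathbb{C}H^n$ is Einstein: $\operatorname{Ric} = -\tfrac{n+1}{2}\,g$. Combining the Einstein condition with the standard Lie--derivative identity $\mathcal{L}_\xi\operatorname{Ric} = -(2n-2)\operatorname{Hess}\sigma - (\Delta\sigma)\,g$ and taking a trace, one obtains (here $n\ge 2$ is needed to divide by $n-1$)
\begin{equation*}
\operatorname{Hess}\sigma = \mu\,\sigma\, g,\qquad \mu=\frac{n+1}{2(2n-1)}>0.
\end{equation*}
The point of the whole argument is the \emph{value} of $\mu$: the identical computation on $\mathbb{R}H^n$ produces $\mu=1$, which is precisely the eigenvalue admitting a rich family of concircular solutions; for $\mathbb{C}H^n$ the value of $\mu$ is geometrically obstructed.

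Next I would realize $\mathbb{C}H^n$ as the Damek--Ricci solvable group $S=\mathbb{R}^{2n-2}_v\times\mathbb{R}_z\times\mathbb{R}_t$ with left-invariant metric of the form $g = dt^2 + e^{-t}\langle dv,dv\rangle + e^{-2t}\,\omega^2$, where $\omega = dz - \tfrac12\langle Jv,dv\rangle$ and $J$ is the fixed complex structure on $\mathbb{R}^{2n-2}$ (up to a normalization of $t$; the essential feature is the ratio $1:2$ of the two warping exponents). Expanding $\operatorname{Hess}\sigma=\mu\sigma g$ in these coordinates gives an overdetermined second-order system. The mixed ($tv$, $tz$, $v_iv_j$, $vz$) equations, after one differentiation, become first-order constraints that pin the $v$- and $z$-dependence of $\sigma$ into a rigid normal form (Busemann-type in $t$, essentially affine in $z$). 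The decisive step is then to compare the ``horizontal'' diagonal equations (coefficient $e^{-t}$) with the ``$zz$'' diagonal equation (coefficient $e^{-2t}$): because these warping exponents differ --- equivalently, because the horospheres $\{t=\mathrm{const}\}$ are \emph{not} totally umbilic in $\mathbb{C}H^n$ for $n\ge 2$ --- the common right-hand side $\mu\sigma$ can satisfy all diagonal equations simultaneously only if $\sigma\equiv 0$. (Conceptually: on a regular level set of a nonconstant $\sigma$, the relation $\operatorname{Hess}\sigma=\mu\sigma g$ would force that hypersurface to be totally umbilic with principal curvature $\mu\sigma/|\nabla\sigma|$, and a Codazzi-equation computation using the nonzero holomorphic sectional curvature excludes totally umbilic hypersurfaces in $\mathbb{C}H^n$ when $n\ge 2$; this is where the complex structure, rather than merely the sign of the curvature, enters.) With $\sigma\equiv 0$ the equation $\mathcal{L}_\xi g=0$ says precisely that $\xi$ is Killing.

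I expect the main obstacle to be the bookkeeping of the middle step. The system coming from $\operatorname{Hess}\sigma=\mu\sigma g$ has many components, and one must integrate the mixed equations carefully --- keeping exact track of the Levi--Civita connection of the Damek--Ricci metric, in particular the cross terms produced by the contact form $\omega$ --- before the crucial comparison of the horizontal and vertical diagonal equations can be carried out cleanly. It is exactly in those cross terms that the $1:2$ exponent ratio, and hence the rigidity that fails on $\mathbb{R}H^n$, makes itself felt.
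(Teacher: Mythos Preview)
Your proposal is correct, and the reduction you outline is the classical one: on any Einstein manifold the conformal potential satisfies the concircular equation $\operatorname{Hess}\sigma=\mu\sigma g$, and on $\mathbb{C}H^n$ with $n\ge 2$ a nontrivial solution would produce totally umbilic level hypersurfaces, which the Codazzi equation together with the explicit curvature tensor of a non-flat complex space form excludes. This is essentially the Tashiro/Yano--Nagano/Kanai line of argument, and the paper itself acknowledges in its closing remark that the theorem follows from Kanai's Theorem~G.

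The paper, however, deliberately takes a different and more laborious route. It never passes through the Einstein condition or the scalar equation for $\sigma$; instead it writes $\xi$ in the left-invariant frame of the Damek--Ricci model, expands $\mathcal{L}_\xi g=2\rho g$ into an explicit system of about ten first-order PDEs for the frame coefficients $f_1,\dots,f_4$, observes that two of these equations say $F_3=e^af_3$ and $F_4=e^af_4$ satisfy the Cauchy--Riemann equations in $(z,w)=(z,e^a)$, expands $F_4$ in planar harmonic polynomials, and then feeds this expansion back through the remaining equations to force, by coefficient comparison in $e^{ka}$ and $z^m$, that $f_4$ is independent of $a$ and hence $\rho=\partial_a f_4=0$. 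Your approach is shorter and more conceptual, and would generalize immediately to any Einstein manifold whose curvature tensor obstructs umbilic hypersurfaces; the paper's approach buys a completely self-contained argument inside the first-order conformal Killing system on the solvable model --- never touching $\operatorname{Ric}$ or second-order identities --- which is the methodology the authors want for their broader programme on Damek--Ricci spaces.
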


After completing the first version of this work, we came across Kanai’s theorem \cite[Theorem G]{Ka}, which refines an earlier result of Tashiro and states that an Einstein manifold admitting a non-homothetic conformal vector field must be one of the real space forms.
Hence, our result can be regarded as a special case of this theorem.
Furthermore, the same conclusion can also be derived from Nickerson’s observation \cite[Remark~2]{Nickerson1985}:
if a locally symmetric and non-conformally flat Riemannian manifold of dimension $\ge 4$ with non-zero (hence constant) scalar curvature admits a conformal vector field, then it must be Killing.
We formulate the corresponding statement in our local setting and provide a brief proof in Appendix.
Hence our theorem may also be viewed as a local re-derivation of this classical rigidity phenomenon.
However, our proof is entirely different in nature.
We regard $\mathbb{C}H^n$ as a Damek–Ricci space and express the conformal Killing condition as an explicit system of partial differential equations on the solvable model.
This analytic viewpoint yields a direct and constructive proof: it not only establishes the nonexistence of non-Killing conformal fields, but also determines the form of all Killing fields.
Furthermore, the same analytic method extends naturally to general Damek–Ricci spaces, which constitute the only known examples of non-symmetric harmonic spaces.
In this sense, the present work may be regarded as a first step toward a broader understanding of infinitesimal conformal rigidity in the family of Damek–Ricci spaces, to be developed further in our forthcoming paper \cite{SS2}.
This demonstrates a strong rigidity property of $\mathbb{C}H^n$: although the space is non-compact and highly symmetric, its conformal group coincides with its isometry group.
Unlike classical global arguments relying on Bochner–Yau integral formulas or group-theoretic considerations, our proof is local and analytic, treating the conformal Killing system directly as a PDE on the solvable model.



\begin{rem}
A related rigidity phenomenon is known in the compact setting.
A classical theorem of Lichnerowicz (see \cite[2.125 Corollary]{Besse}) asserts that any conformal vector field on a compact Kähler manifold of real dimension at least four must be Killing.
Our result may thus be regarded as a noncompact analogue of this rigidity, now in the context of Kähler–Einstein manifolds.
\end{rem}

This paper is organized as follows.  
In Section 2, we recall the definition and basic properties of conformal vector fields and present an explicit construction of nontrivial conformal vector fields on real hyperbolic space as a motivating example.
In Section 3, we consider the complex hyperbolic plane $\mathbb{C}H^2$ viewed as a Damek–Ricci space, and derive the system of partial differential equations that a conformal vector field must satisfy.
In Section 4, we solve this system and show that all conformal vector fields on $\mathbb{C}H^2$ are necessarily Killing.
In Section 5, we extend this analysis to the general case of complex hyperbolic space $\mathbb{C}H^n$, completing the proof of our main result.
Finally, in the Appendix, we include a brief proof showing how Theorem~\ref{Nickerson} follows from Nickerson’s Remark~2, for the reader’s convenience.

\section{Conformal vector fields}

In this section we describe conformal vector fields 
on a Riemannian manifold and explain the construction 
of conformal vector fields on $\mathbb{R}H^n$. 
We also explain the geometry behind the fact that why nontrivial conformal vector fields do not exist on 
$\mathbb{C}H^n$.\\

Let $(M, g)$ be a complete Riemannian manifold and
$\mathfrak{X}(M)$ be the Lie algebra of smooth vector ﬁelds on $M$.

\begin{dfn}
A smooth vector field $\xi$ is said to be a {\it conformal} vector field, if there exists $\rho\in C^\infty(M)$ that satisfies $\mathcal{L}_{\xi}g=2\rho g$, where $\mathcal{L}_{\xi}g$ is the Lie derivative of $g$ with respect to $\xi\in \mathfrak{X}(M)$.
We call $\rho$ the potential function of $\xi$.
In particular, if the potential function $\rho$ is constant,
then $\xi$ is called a {\it homothetic} vector field.
If $\rho=0$, then $\xi$ is called a {\it Killing} vector field.
\end{dfn}

Conformal vector fields are infinitesimal generators of local conformal transformations of the manifold $(M, g)$, that is, smooth one-parameter families of diffeomorphisms $\{\varphi_t\}$ such that $\varphi_t^*g = e^{2\rho_t}g$ for some smooth functions $\rho_t$ depending on $t$.
This classical correspondence between conformal vector fields and conformal transformations has been studied in detail in the foundational works of Yano \cite[Chapter VII]{Yano1957}.

\begin{lem}\label{property_L}
The Lie derivative of the metric $g$ satisfies 
\begin{enumerate}
\item $\mathcal{L}_{\xi_1+\xi_2}g
=\mathcal{L}_{\xi_1}g+\mathcal{L}_{\xi_2}g$\quad$(\xi_1, \xi_2\in \mathfrak{X}(M)),$
\item $\mathcal{L}_{f\xi}g
=f\mathcal{L}_{\xi}g+2\,\mathrm{sym}(df\otimes \xi^\flat)$\quad
$(f\in C^\infty(M),\ \xi\in \mathfrak{X}(M)).$
\end{enumerate}
Here $\xi^\flat=g(\xi,\,\cdot\,)$ is the dual 1-form of $\xi$ with respect to $g$ and $\mathrm{sym}(df\otimes \xi^\flat)$ is the symmetric tensor product of $df$ and $\xi^\flat$;
$$
\mathrm{sym}(df\otimes \xi^\flat)=
\dfrac{1}{2}(df\otimes \xi^\flat+\xi^\flat\otimes df).
$$
\end{lem}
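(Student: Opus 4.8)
The plan is to reduce both identities to elementary properties of the Levi-Civita connection $\nabla$ of $g$, using the standard pointwise formula
\[
(\mathcal{L}_\xi g)(X,Y) \;=\; g(\nabla_X\xi,\,Y) + g(X,\,\nabla_Y\xi), \qquad X,Y\in\mathfrak{X}(M),
\]
which itself follows from the coordinate-free definition $(\mathcal{L}_\xi g)(X,Y) = \xi\,g(X,Y) - g([\xi,X],Y) - g(X,[\xi,Y])$ together with metric compatibility and the vanishing of torsion. Statement $(1)$ is then immediate: the connection is additive in its argument, $\nabla_X(\xi_1+\xi_2) = \nabla_X\xi_1 + \nabla_X\xi_2$, so the right-hand side of the displayed formula is additive in $\xi$.

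For statement $(2)$ I would apply the Leibniz rule $\nabla_X(f\xi) = (Xf)\,\xi + f\,\nabla_X\xi$ in each slot of the formula above. Expanding and collecting terms gives
\[
(\mathcal{L}_{f\xi}g)(X,Y) \;=\; f\bigl(g(\nabla_X\xi,Y) + g(X,\nabla_Y\xi)\bigr) + (Xf)\,g(\xi,Y) + (Yf)\,g(X,\xi),
\]
and since $Xf = df(X)$ and $g(\xi,\cdot) = \xi^\flat$, the last two terms are exactly $df(X)\,\xi^\flat(Y) + \xi^\flat(X)\,df(Y) = 2\,\mathrm{sym}(df\otimes\xi^\flat)(X,Y)$. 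As $X,Y$ are arbitrary, this is the asserted tensor identity. Alternatively one can avoid the connection altogether and feed the bracket identity $[f\xi,X] = f[\xi,X] - (Xf)\xi$ directly into the coordinate-free definition of $\mathcal{L}_{f\xi}g$; the same cancellation of the $f[\xi,\cdot]$ terms against the $f$-weighted pieces of $\mathcal{L}_\xi g$ occurs, leaving the symmetrized $df\otimes\xi^\flat$ contribution.

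There is no genuine obstacle here; the statement is a formal consequence of the derivation property of $\mathcal{L}_\xi$ (equivalently, of the Leibniz rule for $\nabla$). The only point needing mild care is the bookkeeping of constants and signs: matching the factor $\tfrac12$ in the definition of $\mathrm{sym}(df\otimes\xi^\flat)$ against the coefficient $2$ in the statement, and, if one takes the bracket route, the sign in $[f\xi,X] = f[\xi,X] - (Xf)\xi$.
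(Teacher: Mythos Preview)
Your proof is correct and essentially identical to the paper's: the paper also uses the formula $(\mathcal{L}_\xi g)(X,Y)=g(\nabla_X\xi,Y)+g(X,\nabla_Y\xi)$, dismisses (1) by linearity, and expands (2) via the Leibniz rule $\nabla_X(f\xi)=(Xf)\xi+f\nabla_X\xi$ to obtain exactly the line you wrote. The bracket-based alternative you mention is not used in the paper, but your primary argument matches it step for step.
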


\begin{proof}
We prove only (2), since (1) is a direct consequence of the linearity of the Lie derivative.
Let $X, Y \in \mathfrak{X}(M)$ be arbitrary vector fields.
Then,
\begin{align*}
(\mathcal{L}_{f\xi}g)(X, Y)
=&g(\nabla_X(f\xi), Y)+g(X, \nabla_Y(f\xi)),\\
=&g(X(f)\,\xi+f\nabla_X \xi, Y)+g(X, Y(f)\,\xi+f\nabla_Y \xi),\\
=&f\left(
g(\nabla_X \xi, Y)+g(X, \nabla_Y \xi)
\right)
+X(f)\,g(\xi, Y)+Y(f)\,g(X, \xi),\\
=&f\,\mathcal{L}_{\xi}g(X, Y)+2\,\mathrm{sym}(df\otimes \xi^\flat)(X, Y),
\end{align*}
as claimed.
\end{proof}

From this lemma, it follows that the set of all conformal vector fields is a real vector space.
Its dimension coincides with that of the local conformal transformation group. It is a classical result that this dimension is bounded above by $\frac{1}{2}(n+1)(n+2)$ for an $n$-dimensional Riemannian manifold with $n \geq 3$ (see \cite[p.310, Theorem 1]{KN1}).
Moreover, if a vector field $\xi$ is conformal with respect to a metric $g$ with potential function $\rho$,
then $\xi$ is also a conformal vector field with respect to the conformally related metric $\tilde{g} = e^{2f}g$,
with a new potential function given by $\tilde{\rho} = \rho + \xi(f)$.

As an illustrative example, we now describe how conformal vector fields can be constructed on the real hyperbolic space $\mathbb{R}H^n$ using Busemann functions.
This construction exploits special properties of Busemann functions that are specific to the real hyperbolic geometry, and does not extend to the complex hyperbolic case discussed later.
Let us first recall the definition of the Busemann function.

\begin{dfn}
Let $(M,g)$ be a complete non-compact Riemannian manifold,
and let $v \in T_pM$ be a unit tangent vector at a point $p \in M$.
The \textit{Busemann function} associated with $v$ is the function $b_v : M \to \mathbb{R}$ defined by
\[
b_v(x) = \lim_{t \to \infty}\left(d(x, \gamma_v(t)) - t\right),
\]
where $\gamma_v : [0, \infty) \to M$ is the geodesic ray with $\gamma_v'(0)=v$ and $d(\cdot,\cdot)$ denotes the distance function on $(M,g)$.
\end{dfn}

\begin{prop}\label{hess b}
Assume that there exists a function $b\in C^\infty(M)$ satisfying that
\begin{equation}\label{cond_Hessian}
\mathrm{Hess}(b)=g-db\otimes db,
\end{equation}
where $\mathrm{Hess}(b)$ is the Hessian of $b$.
Then, $\xi=\nabla \rho$, where $\rho=\exp(b)$, is a conformal vector field with the potential function $\rho$.
\end{prop}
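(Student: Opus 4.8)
The plan is to reduce the conformal equation for the gradient field $\xi=\nabla\rho$ to a pointwise identity between Hessians. First I would recall the standard fact that for any $\rho\in C^\infty(M)$ one has $\mathcal{L}_{\nabla\rho}\,g=2\,\mathrm{Hess}(\rho)$. Indeed, for arbitrary $X,Y\in\mathfrak{X}(M)$,
\[
(\mathcal{L}_{\nabla\rho}g)(X,Y)=g(\nabla_X\nabla\rho,Y)+g(\nabla_Y\nabla\rho,X)=\mathrm{Hess}(\rho)(X,Y)+\mathrm{Hess}(\rho)(Y,X)=2\,\mathrm{Hess}(\rho)(X,Y),
\]
using that $\nabla$ is metric and torsion-free (so $\mathrm{Hess}(\rho)$ is symmetric). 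Hence the assertion ``$\xi$ is conformal with potential $\rho$'' is equivalent to the pointwise identity $\mathrm{Hess}(\rho)=\rho\,g$.

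Next I would compute $\mathrm{Hess}(e^b)$ in terms of $b$ via the ``chain rule'' for the Hessian. Since $\rho=\exp(b)$, we have $d\rho=e^b\,db=\rho\,db$, and applying the connection $\nabla$ to this $1$-form gives
\[
\mathrm{Hess}(\rho)=\nabla(d\rho)=\nabla(\rho\,db)=d\rho\otimes db+\rho\,\nabla(db)=\rho\big(db\otimes db+\mathrm{Hess}(b)\big).
\]
Substituting the hypothesis \eqref{cond_Hessian}, namely $\mathrm{Hess}(b)=g-db\otimes db$, the two $db\otimes db$ terms cancel and we obtain $\mathrm{Hess}(\rho)=\rho\,g$. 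Combined with the first step, this yields $\mathcal{L}_{\xi}g=\mathcal{L}_{\nabla\rho}g=2\,\mathrm{Hess}(\rho)=2\rho\,g$, which is exactly the conformal equation for $\xi$ with potential function $\rho$.

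I do not anticipate a genuine obstacle: the argument is essentially a one-line tensor computation once the identity $\mathcal{L}_{\nabla\rho}g=2\,\mathrm{Hess}(\rho)$ and the Hessian chain rule are in place. The only point worth being careful about is the bookkeeping of symmetrization --- both $\mathcal{L}_{\nabla\rho}g$ and $\nabla(d\rho)$ are manifestly symmetric, so no $\mathrm{sym}(\cdot)$ correction term of the kind appearing in Lemma~\ref{property_L}(2) intrudes. As an alternative route, one could instead apply Lemma~\ref{property_L}(2) directly with $f=\rho$ and the field $\nabla b$, using $\nabla\rho=\rho\,\nabla b$ and $\mathcal{L}_{\nabla b}g=2\,\mathrm{Hess}(b)$; this produces $\mathcal{L}_{\nabla\rho}g=\rho\,\mathcal{L}_{\nabla b}g+2\,\mathrm{sym}(d\rho\otimes db)=2\rho\,\mathrm{Hess}(b)+2\rho\,db\otimes db$, and the same cancellation with \eqref{cond_Hessian} gives $2\rho\,g$.
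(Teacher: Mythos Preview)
Your proof is correct and essentially coincides with the paper's: the paper applies Lemma~\ref{property_L}(2) to $\nabla\rho=\rho\,\nabla b$ exactly as in your ``alternative route,'' obtaining $\mathcal{L}_{\nabla\rho}g=2\rho\,\mathrm{Hess}(b)+2\rho\,db\otimes db=2\rho\,g$. Your main approach via $\mathrm{Hess}(e^b)=e^b(\mathrm{Hess}(b)+db\otimes db)$ is just a trivially equivalent repackaging of the same computation.
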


\begin{proof}
From Lemma \ref{property_L} (2), we have
\begin{align*}
\mathcal{L}_{\xi}g
=\mathcal{L}_{\rho \nabla b}g
=&\rho \mathcal{L}_{\nabla b}g+\mathrm{sym}(d\rho\otimes (\nabla b)^\flat)\\
=&\rho\cdot 2\mathrm{Hess}(b)
+\mathrm{sym}(\rho\,db\otimes db)\\
=&\rho \left(
2\mathrm{Hess}(b)+2db\otimes db
\right)
=2\rho\,g.
\end{align*}
\end{proof}

In the real hyperbolic space of constant sectional curvature $-1$, it is known that for any unit tangent vector  $v$,
the associated Busemann function $b_v$ satisfies equation \eqref{cond_Hessian} (see \cite[p.751]{BCG}).
In fact, this identity characterizes the constant curvature condition, that is, \eqref{cond_Hessian} holds if and only if the ambient space has constant sectional curvature $-1$.
Therefore, the following observation immediately follows.

\begin{cor}
Let $\mathbb{R}H^n(-1)$ be the real hyperbolic space of constant sectional curvature $(-1)$.
Then, for any unit tangent vector $v$,
$\xi=\nabla \rho$, where $\rho=\exp(b_v)$ is a conformal vector field on $\mathbb{R}H^n(-1)$.
\end{cor}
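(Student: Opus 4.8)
The plan is to obtain this as an immediate consequence of Proposition~\ref{hess b} together with the curvature characterization of the Hessian identity~\eqref{cond_Hessian} recalled just above. First I would observe that on $\mathbb{R}H^n(-1)$ the Busemann function $b_v$ is smooth on the whole manifold: since $\mathbb{R}H^n(-1)$ is a Hadamard manifold, $b_v$ is $C^\infty$, and in the ball or upper half-space model it even admits an explicit closed form. Thus $b_v \in C^\infty(\mathbb{R}H^n(-1))$ is a legitimate candidate for the function $b$ appearing in the hypothesis of Proposition~\ref{hess b}.

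Next I would invoke the fact, cited from \cite[p.751]{BCG}, that in constant sectional curvature $-1$ every Busemann function satisfies $\mathrm{Hess}(b_v) = g - db_v \otimes db_v$, which is exactly equation~\eqref{cond_Hessian}. Applying Proposition~\ref{hess b} with $b = b_v$ then yields at once that $\xi = \nabla\rho$, with $\rho = \exp(b_v)$, is a conformal vector field with potential function $\rho$. For a self-contained argument one could instead verify \eqref{cond_Hessian} directly in the upper half-space model $\{(x_1,\dots,x_{n-1},y) : y>0\}$, $g = y^{-2}\bigl(\sum_a dx_a^2 + dy^2\bigr)$: the Busemann function of the upward vertical geodesic ray is $b = -\log y$ up to an additive constant, and a short computation using the Christoffel symbols $\Gamma^n_{ab} = \delta_{ab}/y$, $\Gamma^a_{bn} = -\delta_{ab}/y$, $\Gamma^n_{nn} = -1/y$ (indices $a,b \in \{1,\dots,n-1\}$, all others zero) gives $\mathrm{Hess}(-\log y) = g - db_v\otimes db_v$; since $\mathrm{Isom}(\mathbb{R}H^n(-1))$ acts transitively on unit tangent vectors, the identity transfers to an arbitrary $b_v$.

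Being a corollary, this has no genuine obstacle; the only point worth flagging is the role of the curvature normalization. The value $-1$ is precisely what makes the coefficient in \eqref{cond_Hessian} equal to $1$, so that $\rho = \exp(b_v)$ itself — rather than a fractional power of it — serves as the potential; for curvature $-c^2$ one rescales $b_v$ by $c$ accordingly. This same normalization dependence is the structural reason the corollary has no counterpart on $\mathbb{C}H^n$: there the Busemann function fails \eqref{cond_Hessian}, since its Hessian has unequal eigenvalues along the complex line spanned by $\nabla b_v$ and along the orthogonal complement, reflecting the non-constancy of the sectional curvature. It is exactly this failure that the rest of the paper must confront.
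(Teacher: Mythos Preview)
Your proof is correct and follows exactly the paper's approach: cite the Hessian identity for Busemann functions on $\mathbb{R}H^n(-1)$ from \cite{BCG} and then apply Proposition~\ref{hess b}. The additional self-contained verification in the upper half-space model and the remarks on curvature normalization are correct extras that the paper does not include, but the core argument is identical.
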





It is important to note that Proposition~\ref{hess b} does not imply that conformal vector fields exist only on real hyperbolic spaces of sectional curvature $-1$.
Since the metric $g_\lambda:=\frac{1}{\sqrt{-\lambda}}g_{-1}$ defines the real hyperbolic space $\mathbb{R}H^n(\lambda)$ of constant sectional curvature $\lambda < 0$, and conformal vector fields remain conformal under conformal changes of the metric, the same vector fields constructed using the Busemann function for $\mathbb{R}H^n(-1)$ also define conformal vector fields on $\mathbb{R}H^n(\lambda)$.
More generally, since a conformal vector field with respect to a metric $g$ remains conformal for any conformally related metric $e^{2f}g$, this construction applies to all spaces conformally equivalent to $\mathbb{R}H^n(-1)$, using the same underlying Busemann function.


However, this method does not extend directly to other noncompact rank-one symmetric spaces.
In these spaces, the Hessian of the Busemann function has two distinct eigenvalues (see \cite[p.751]{BCG}).
This anisotropy prevents the direct construction of conformal vector fields via the Busemann function, as in the real hyperbolic case.
In the next section, we turn to the case of $\mathbb{C}H^n$ and examine how conformal vector fields may or may not arise in that setting.

\begin{rem}
It is well known that the isometry group of the real hyperbolic space $\mathbb{R}H^n$ has dimension $\frac{1}{2}n(n+1)$, and that the maximum dimension of the conformal transformation group on an $n$-dimensional Riemannian manifold is $\frac{1}{2}(n+1)(n+2)$.
Busemann functions, which are initially defined via unit tangent vectors $v \in T_p\mathbb{R}H^n$, can equivalently be viewed as depending on a base point $p \in M$ and a point at infinity $\partial M$.
In the upper half-space model of $\mathbb{R}H^n$, fixing the base point at $(\bm{0},1)$, we can express the Busemann functions associated to the ideal point at infinity $\infty$ and a boundary point $(\bm{a}, 0)$ as follows:
$$
b_\infty(\bm{x}, y) = - \log y, \quad
b_{\bm{a}}(\bm{x}, y) = \log \left( \frac{|\bm{x} - \bm{a}|^2+y^2}{y} \right).
$$
By computing $\nabla \exp(b)$ for each of these functions, it follows immediately that the conformal vector fields generated by Busemann functions span an $(n+1)$-dimensional space of non-Killing conformal vector fields.
In particular, this shows that all non-Killing conformal vector fields arise from Busemann functions, and that there are no nontrivial homothetic vector fields on $\mathbb{R}H^n$.
\end{rem}

\section{Conformal vector fields on $\mathbb{C}H^2$}
\label{CH2}

In this section, first we explain the system of partial differential equations leading to a conformal vector field equation on $\mathbb{C}H^2$.\\
A complex hyperbolic plane $\mathbb{C}H^2$ is the Damek-Ricci space $S=N\ltimes \mathbb{R}A$, where the Lie algebra $\mathfrak{n} = \mathfrak{v}\oplus \mathfrak{z}$ of $N$ has a one-dimensional center $\mathfrak{z}=\mathbb{R}Z$ and $\mathfrak{v}=\mathrm{span}\{ V, J_Z V\}$,
where $V\in \mathfrak{v}$ and $Z\in\mathfrak{z}$ are unit vectors and $J_Z : \mathfrak{v}\rightarrow \mathfrak{v}$ a linear map defined by $\langle J_Z V, V'\rangle=\langle Z, [V, V']\rangle$ (see \cite[pp.22]{BTV}).
Any vector field on $S=\mathbb{C}H^2$ can be expressed using the left-invariant vector fields as
\begin{equation}\label{xi}
\xi=f_1\,V+f_2\,J_ZV+f_3\,Z+f_4\,A,\qquad f_i\in C^\infty(S).
\end{equation}
In this case, we examine whether there exist functions $f_1, f_2, f_3$, and $f_4$ such that $\xi$ becomes a conformal vector field.

\subsection{The Lie derivative components on $\mathbb{C}H^2$}

The Lie bracket on $\mathfrak{s} = \mathrm{Lie}(S)$ is generally defined as follows \cite[pp.79]{BTV}:
\begin{equation*}\label{DRbp}
[V_1+Z_1+a_1 A, V_2+Z_2+a_2 A]_{\mathfrak{s}} := [V_1, V_2]_{\mathfrak{n}} + \frac{a_1}{2}V_2 - \frac{a_2}{2}V_1 + a_1 Z_2 - a_2 Z_1,
\end{equation*}
where $V_i\in \mathfrak{v}, Z_i \in \mathfrak{z}, a_i\in \mathbb{R}$.
In particular, in $\mathbb{C}H^2$,
we obtain
\begin{align*}
[V, J_ZV]=&Z, &[V,Z]=&[J_ZV, Z]=0, &&\\
[A,V]=&\dfrac{1}{2}V, &[A, J_Z V]=&\dfrac{1}{2}J_ZV, &[A, Z]=&Z.
\end{align*}
Considering the elements of the Lie algebra as left-invariant vector fields on $\mathbb{C}H^2$,
the components of the Lie derivative of the metric $g$ are as follows:
\begin{equation*}
\mathcal{L}_Vg(V,A)=\frac{1}{2},\quad
\mathcal{L}_Vg(J_ZV,Z)=-1,
\end{equation*}
\begin{equation*}
\mathcal{L}_{J_ZV}g(V,Z)=1,\quad
\mathcal{L}_{J_ZV}g(J_ZV,A)=\dfrac{1}{2},
\end{equation*}
\begin{equation*}
\mathcal{L}_{Z}g(Z,A)=1,
\end{equation*}
\begin{equation*}
\mathcal{L}_{A}g(V,V)=\mathcal{L}_{A}g(J_ZV,J_ZV)=-1,\quad
\mathcal{L}_{A}g(Z,Z)=-2.
\end{equation*}
Here, all components not listed above are zero.
Hence, we have
\begin{equation}\label{Lie_g}
\begin{split}
\mathcal{L}_Vg
=\left(
\begin{array}{cc:cc}
{ }&{ }&\ast&\ast\\
{ }&{ }&\ast&\ast\\
\hdashline
0&-1&&\\
\frac{1}{2}&0&&
\end{array}
\right),&\qquad
\mathcal{L}_{J_ZV}g
=\left(
\begin{array}{cc:cc}
{ }&{ }&\ast&\ast\\
{ }&{ }&\ast&\ast\\
\hdashline
1&0&&\\
0&\frac{1}{2}&&
\end{array}
\right),\\
\mathcal{L}_Zg
=\left(
\begin{array}{cc:cc}
\hspace{15pt}&{ }&{ }&{ }\\
{ }&{ }&{ }&{ }\\
\hdashline
{ }&{ }&0&1\\
{ }&{ }&1&0
\end{array}
\right),&\qquad
\mathcal{L}_Ag
=\left(
\begin{array}{cc:cc}
-1&0&&\\
0&-1&&\\
\hdashline
&&-2&0\\
&&0&0
\end{array}
\right).
\end{split}
\end{equation}
Here, the blank blocks represent zero matrices.





\subsection{Exterior derivative of a function on $\mathbb{C}H^2$}

In this subsection, we express the 1-form $df$ for $f\in C^\infty(S)$ using the dual forms of the left-invariant vector fields $V, J_Z V, Z, A$.
For this purpose, we introduce a global coordinate system on the Damek-Ricci space.
Let $\{e_i\}_{i=1,2,\ldots,k}$ and $\{e_{k+r}\}_{r=1,2,\ldots,m}$ be orthonormal bases for $\mathfrak{v}$ and $\mathfrak{z}$, respectively.
Through the exponential map,
we can identify $S$ with $\mathbb{R}^{k+m+1}$ and we
coordinatize $\mathbb{R}^{k+m+1}$ as
$(v_1, \ldots, v_k, z_1, \ldots, z_m, a)$.
Using this coordinate system, the left-invariant vector fields can be expressed as
\begin{align*}
E_i=&e^{a/2}\dfrac{\partial}{\partial v_i}
-\dfrac{e^{a/2}}{2}\sum_{j}\sum_{r}A^r_{ij}v_j\dfrac{\partial}{\partial z_r},\\
E_{k+r}=&e^{a}\dfrac{\partial}{\partial z_r},\\
E_0=&\dfrac{\partial}{\partial a},
\end{align*}
where $A^r_{ij}=g([e_i, e_j], e_{k+r})$ \cite[pp.82]{BTV}.
When $S=\mathbb{C}H^2$,
setting $e_1 = V, e_2 = J_Z V$, and $e_3 = Z$, and $(v_1, v_2, z, a)=(x, y, z, a)$,
we obtain
$$
V=e^{a/2}\left(\dfrac{\partial}{\partial x}
-\dfrac{y}{2}\dfrac{\partial}{\partial z}\right),\ 
J_ZV=e^{a/2}\left(\dfrac{\partial}{\partial y}
+\dfrac{x}{2}\dfrac{\partial}{\partial z}\right),\ 
Z=e^{a}\dfrac{\partial}{\partial z},\ 
A=\dfrac{\partial}{\partial a}.
$$
Hence, the 1-form $df$ is expressed as the form
\begin{align*}
df
=&Vf\cdot V^\flat + (J_ZV)f\cdot (J_ZV)^\flat + Zf\cdot Z^\flat + Af\cdot A^\flat\\
=&
e^{a/2}\left(\dfrac{\partial f}{\partial x}-\dfrac{y}{2}\dfrac{\partial f}{\partial z}\right)
V^\flat
+
e^{a/2}\left(\dfrac{\partial f}{\partial y}+\dfrac{x}{2}\dfrac{\partial f}{\partial z}\right)
(J_ZV)^\flat
+
e^{a} \dfrac{\partial f}{\partial z}
Z^\flat
+
\dfrac{\partial f}{\partial a}
A^\flat.
\end{align*}

From the above, we obtain, 
\begin{equation}\label{dfxV}
2\mathrm{sym}(df_1\otimes V^\flat+df_2\otimes (J_Z V)^\flat+df_3\otimes Z^\flat+df_4\otimes A^\flat)\\
=\left(
\begin{array}{cc}
\mathcal{F}_{11}&{ }^t\mathcal{F}_{21}\\
\mathcal{F}_{21}&\mathcal{F}_{22}
\end{array}
\right),
\end{equation}
where
\begin{align*}
\mathcal{F}_{11}
=&e^{a/2}\left(
\begin{array}{cc}
2\left(\frac{\partial f_1}{\partial x}-\frac{y}{2}\frac{\partial f_1}{\partial z}\right)
&
\ast\\
\left(\frac{\partial f_1}{\partial y}+\frac{\partial f_1}{\partial z}\right)
+
\left(\frac{\partial f_2}{\partial x}-\frac{y}{2}\frac{\partial f_2}{\partial z}\right)
&
2\left(\frac{\partial f_2}{\partial y}+\frac{x}{2}\frac{\partial f_2}{\partial z}\right)
\end{array}
\right),\\
\mathcal{F}_{21}
=&\left(
\begin{array}{cc}
e^{a} \frac{\partial f_1}{\partial z}
+
e^{a/2}\left(\frac{\partial f_3}{\partial x}-\frac{y}{2}\frac{\partial f_3}{\partial z}\right)
&
e^{a} \frac{\partial f_2}{\partial z}
+
e^{a/2}\left(\frac{\partial f_3}{\partial y}+\frac{x}{2}\frac{\partial f_3}{\partial z}\right)\\
\frac{\partial f_1}{\partial a}
+
e^{a/2}\left(\frac{\partial f_4}{\partial x}-\frac{y}{2}\frac{\partial f_4}{\partial z}\right)
&
\frac{\partial f_2}{\partial a}
+
e^{a/2}\left(\frac{\partial f_4}{\partial y}+\frac{x}{2}\frac{\partial f_4}{\partial z}\right)
\end{array}
\right),\\
\mathcal{F}_{22}
=&\left(
\begin{array}{cc}
2e^{a} \frac{\partial f_3}{\partial z}
&
\ast
\\
\frac{\partial f_3}{\partial a}
+
e^{a} \frac{\partial f_4}{\partial z}
&
2\frac{\partial f_4}{\partial a}
\end{array}
\right).
\end{align*}

\subsection{Existence of conformal vector fields on $\mathbb{C}H^2$ and related results}

In this subsection, we establish the system of differential equations corresponding to  the equation
$\mathcal{L}_\xi g = 2 fg.$ We also show that $\xi$ is a Killing vector field if (i) it is a homothetic vector field or (ii) if $e^a f_3$ or $e^a f_4$ is a radial harmonic function in the $zw$-plane.
Furthermore, we show that if $\xi$ is a Killing vector
field than it can neither be closed nor it can have constant length.\\

For a vector field $\xi$ written as \eqref{xi}, from Lemma \ref{property_L},
we have
\begin{align*}
\mathcal{L}_\xi g
=&f_1 \mathcal{L}_V g+f_2 \mathcal{L}_{J_Z V} g+f_3 \mathcal{L}_Z g+f_4 \mathcal{L}_A g\\
&+2\mathrm{sym}(df_1\otimes V^\flat+df_2\otimes (J_Z V)^\flat+df_3\otimes Z^\flat+df_4\otimes A^\flat).
\end{align*}
From equations \eqref{Lie_g} and \eqref{dfxV},
the condition that $\xi$ satisfies $\mathcal{L}_\xi g=2\rho g$ is equivalent to the following equations:
\begin{align}
\label{(1,1)}
2e^{a/2}\left(\frac{\partial f_1}{\partial x}-\frac{y}{2}\frac{\partial f_1}{\partial z}\right)
-f_4
=&2\rho,\\
\label{(2,1)}
e^{a/2}\left(\frac{\partial f_1}{\partial y}+\frac{x}{2}\frac{\partial f_1}{\partial z}\right)
+
e^{a/2}\left(\frac{\partial f_2}{\partial x}-\frac{y}{2}\frac{\partial f_2}{\partial z}\right)
=&0,\\
\label{(2,2)}
2e^{a/2}\left(\frac{\partial f_2}{\partial y}+\frac{x}{2}\frac{\partial f_2}{\partial z}\right)
-f_4
=&2\rho,\\
e^{a} \frac{\partial f_1}{\partial z}
+
e^{a/2}\left(\frac{\partial f_3}{\partial x}-\frac{y}{2}\frac{\partial f_3}{\partial z}\right)
+f_2
=&0,\\
e^{a} \frac{\partial f_2}{\partial z}
+
e^{a/2}\left(\frac{\partial f_3}{\partial y}+\frac{x}{2}\frac{\partial f_3}{\partial z}\right)
-f_1
=&0,\\
\label{(3,3)}
2e^{a} \frac{\partial f_3}{\partial z}
-2f_4
=&2\rho,\\
\label{(4,1)}
\frac{\partial f_1}{\partial a}
+
e^{a/2}\left(\frac{\partial f_4}{\partial x}-\frac{y}{2}\frac{\partial f_4}{\partial z}\right)
+\dfrac{1}{2}f_1
=&0,\\
\label{(4,2)}
\frac{\partial f_2}{\partial a}
+
e^{a/2}\left(\frac{\partial f_4}{\partial y}+\frac{x}{2}\frac{\partial f_4}{\partial z}\right)
+\dfrac{1}{2}f_2
=&0,\\
\label{(4,3)}
\frac{\partial f_3}{\partial a}
+
e^{a} \frac{\partial f_4}{\partial z}
+f_3
=&0,\\
\label{(4,4)}
2\frac{\partial f_4}{\partial a}
=&2\rho.
\end{align}
By substituting \eqref{(4,4)} into \eqref{(1,1)}, \eqref{(2,2)}, and \eqref{(3,3)}, and simplifying, we obtain
\begin{align}
\label{(1,1)'}
\left(\frac{\partial f_1}{\partial x}-\frac{y}{2}\frac{\partial f_1}{\partial z}\right)
=&e^{-a}\dfrac{\partial}{\partial a}\left(e^{a/2}f_4\right),\\
\label{(2,1)'}
\left(\frac{\partial f_1}{\partial y}+\frac{x}{2}\frac{\partial f_1}{\partial z}\right)
+
\left(\frac{\partial f_2}{\partial x}-\frac{y}{2}\frac{\partial f_2}{\partial z}\right)
=&0,\\
\label{(2,2)'}
\left(\frac{\partial f_2}{\partial y}+\frac{x}{2}\frac{\partial f_2}{\partial z}\right)
=&e^{-a}\dfrac{\partial}{\partial a}\left(e^{a/2}f_4\right),\\
\label{(3,1)}
e^{a} \frac{\partial f_1}{\partial z}
+
e^{a/2}\left(\frac{\partial f_3}{\partial x}-\frac{y}{2}\frac{\partial f_3}{\partial z}\right)
+f_2
=&0,\\
\label{(3,2)}
e^{a} \frac{\partial f_2}{\partial z}
+
e^{a/2}\left(\frac{\partial f_3}{\partial y}+\frac{x}{2}\frac{\partial f_3}{\partial z}\right)
-f_1
=&0,\\
\label{(3,3)'}
\dfrac{\partial}{\partial z}\left(e^af_3\right)
=&e^{-a}\dfrac{\partial}{\partial a}\left(e^af_4\right),\\
\label{(4,1)'}
e^a\left(\frac{\partial f_4}{\partial x}-\frac{y}{2}\frac{\partial f_4}{\partial z}\right)
+\dfrac{\partial}{\partial a}\left(e^{a/2}f_1\right)
=&0,\\
\label{(4,2)'}
e^a\left(\frac{\partial f_4}{\partial y}+\frac{x}{2}\frac{\partial f_4}{\partial z}\right)
+\dfrac{\partial}{\partial a}\left(e^{a/2}f_2\right)
=&0,\\
\label{(4,3)'}
\dfrac{\partial}{\partial z}\left(e^af_4\right)
=&-e^{-a}\dfrac{\partial}{\partial a}\left(e^af_3\right).
\end{align}

Thus, the existence of conformal vector fields on $\mathbb{C}H^2$ is reduced to the existence of non-trivial functions $f_i : \mathbb{R}^4 \to \mathbb{R}$ that satisfy the partial differential equations \eqref{(1,1)'} through \eqref{(4,3)'}.\\


Recall that a conformal vector field is said to be {\it homothetic} if the conformal factor is constant.
From these observations, we can first deduce the following fundamental property.

\begin{thm}\label{homo}
If $\xi$ is a homothetic vector field on $\mathbb{C}H^2$, then it is a Killing vector field. 
\end{thm}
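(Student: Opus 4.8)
The plan is to show that the constant potential $\rho$ must vanish, using only the three reduced equations \eqref{(4,4)}, \eqref{(3,3)'} and \eqref{(4,3)'}. Write $\rho = c$ with $c$ constant. The first step is to extract from \eqref{(3,3)'} and \eqref{(4,3)'} --- which hold for \emph{every} conformal vector field --- a closed second-order scalar equation for $Q := e^{a}f_4$. Differentiating \eqref{(4,3)'} with respect to $z$ and then using \eqref{(3,3)'} to replace $\partial_z(e^{a}f_3)$ gives
\[
\partial_z^2 Q + e^{-a}\partial_a\!\left(e^{-a}\partial_a Q\right) = 0 .
\]
Under the substitution $w = e^{a}$ the operator $e^{-a}\partial_a$ becomes $\partial_w$, so for each fixed $(x,y)$ this says precisely that $Q$ is harmonic in the $(z,w)$-plane (in fact $e^{a}f_3$ and $e^{a}f_4$ form there a conjugate harmonic pair); this is the planar-harmonic structure anticipated in the introduction.

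Next I would feed in the homothety hypothesis. Equation \eqref{(4,4)} reads $\partial_a f_4 = c$, hence $f_4 = c\,a + F_4(x,y,z)$ with $F_4$ independent of $a$, so $Q = e^{a}\bigl(c\,a + F_4(x,y,z)\bigr)$. Substituting this expression into the harmonic equation above and carrying out the one-line computation of the two terms yields
\[
e^{a}\,\partial_z^2 F_4(x,y,z) + c\,e^{-a} = 0,
\qquad\text{i.e.}\qquad
\partial_z^2 F_4(x,y,z) = -\,c\,e^{-2a}.
\]

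The conclusion is then immediate: the left-hand side is independent of $a$, while the right-hand side is a nonzero multiple of $e^{-2a}$ unless $c = 0$. Hence $c = 0$, the potential $\rho$ vanishes identically, and $\xi$ is Killing. I do not expect a serious obstacle: the only step requiring insight is recognizing the hidden two-dimensional Laplacian inside \eqref{(3,3)'}--\eqref{(4,3)'} (equivalently, the change of variable $w = e^{a}$), after which the argument reduces to matching the $a$-dependence of an identity. This is, in effect, the local PDE shadow of the classical fact that a complete Riemannian manifold admitting a proper homothety must be flat, recovered here directly on the solvable model without invoking any completeness or group-action input.
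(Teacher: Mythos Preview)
Your argument is correct. Both you and the paper use only equations \eqref{(4,4)}, \eqref{(3,3)'}, \eqref{(4,3)'} and conclude by matching $a$-dependence: a quantity independent of $a$ is forced to equal a nonzero multiple of $e^{-2a}$, so the constant must vanish. The paper, however, proceeds by integrating \eqref{(3,3)'} to express $e^{a}f_3$ (introducing integration constants $C_2(x,y,z)$ and $C_3(x,y,a)$), then feeds this into \eqref{(4,3)'}, and only after evaluating and differentiating at $z=0$ isolates the offending term $-\tfrac{C_0}{2}e^{-2a}z^{2}$ in the expression for $C_1$. You instead eliminate $f_3$ at the outset by combining \eqref{(3,3)'} and \eqref{(4,3)'} into the single second-order equation $\partial_z^2 Q + e^{-a}\partial_a(e^{-a}\partial_a Q)=0$ for $Q=e^{a}f_4$; this is exactly the planar Laplacian that the paper records (as the Cauchy--Riemann system for $(F_3,F_4)$ in the variable $w=e^{a}$) only in a remark \emph{after} its proof. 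Your route is therefore the same in substance but more transparent: by recognizing the harmonic structure up front you avoid carrying the auxiliary integration constants and reach the contradiction $\partial_z^2 F_4 = -c\,e^{-2a}$ in one line.
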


\begin{proof}
The proof immediately follows from the above equations.
We suppose that $\rho\equiv C_0$.
From \eqref{(4,4)} we have $\dfrac{\partial f_4}{\partial a}=C_0$, i.e.,
$f_4$ can be expressed using a function $C_1(x,y,z)$ as
\begin{equation*}
f_4(x,y,z,a)=C_0\,a+C_1(x,y,z).
\end{equation*}
From \eqref{(3,3)'}, we have
\begin{align*}
\frac{\partial}{\partial z}\left(e^a f_3\right)
=e^{-a}\frac{\partial}{\partial a}\left(
e^a f_4
\right)
=&e^{-a}\frac{\partial}{\partial a}\left\{
e^a(C_0 a+C_1(x,y,z))
\right\}\\
=&e^{-a}\left\{
e^a(C_0 a+C_1(x,y,z))+e^a C_0
\right\}\\
=&C_0(a+1)+C_1(x,y,z).
\end{align*}
Hence, we can write
\begin{equation*}
e^a f_3(x,y,z,a)
=C_0(a+1) z + C_2(x,y,z)+C_3(x,y,a),
\end{equation*}
where $C_2(x,y,z)$ is a function satisfying $\frac{\partial C_2}{\partial z}=C_1$ and $C_3(x,y,a)$ is a function that does not depend on the variable $z$.
The left-hand side of \eqref{(4,3)'} becomes
\begin{equation*}
\dfrac{\partial}{\partial z}\left(e^af_4\right)
=e^a\dfrac{\partial}{\partial z}\left\{C_0 a+C_1(x,y,z)\right\}
=e^a\dfrac{\partial C_1}{\partial z}(x,y,z).
\end{equation*}
On the other hand, the right-hand side becomes
\begin{align*}
-e^{-a}\dfrac{\partial}{\partial a}\left(e^af_3\right)
=&-e^{-a}\dfrac{\partial}{\partial a}\left\{
C_0(a+1) z + C_2(x,y,z)+C_3(x,y,a)
\right\}\\
=&-e^{-a}\left(
C_0 z+\frac{\partial C_3}{\partial a}(x,y,a)\right),
\end{align*}
so the following equation holds:
\begin{equation}\label{eq_C1}
C_1(x,y,z)
=-e^{-2a}\left\{
\dfrac{C_0}{2}z^2
+\frac{\partial C_3}{\partial a}(x,y,a)\,z
\right\}
+C_4(x,y,a),
\end{equation}
where $C_4(x,y,a)$ is a function that does not depend on the variable $z$.
In \eqref{eq_C1}, setting $z = 0$ gives
\begin{equation*}
C_1(x,y,0)=C_4(x,y,a).
\end{equation*}
Hence, $C_1$ can be written as
\begin{equation}\label{eq_C1’}
C_1(x,y,z)
=-e^{-2a}\left\{
\dfrac{C_0}{2}z^2
+\frac{\partial C_3}{\partial a}(x,y,a)\,z\right\}
+C_1(x,y,0).
\end{equation}
Differentiating both sides of \eqref{eq_C1’} with respect to $z$ and substituting $z = 0$ yields
\begin{equation*}
\frac{\partial C_1}{\partial z}(x,y,0)=-e^{-2a}\frac{\partial C_3}{\partial a}(x,y,a).
\end{equation*}
Hence, $C_1$ can be written as
\begin{equation}\label{eq_C1''}
C_1(x,y,z)
=-\dfrac{C_0}{2} e^{-2a} z^2
+\frac{\partial C_1}{\partial z}(x,y,0)\,z
+C_1(x,y,0).
\end{equation}
Since $C_1(x,y,z)$ does not depend on the variable $a$, 
$C_0$ must be zero.
\end{proof}

\begin{rem}
\begin{itemize}
\item[(1)] From the above theorem, we recover the result that the Busemann function can't be a homothetic vector field, as it is not a Killing vector field on $\mathbb{C}H^n$.
\item[(2)] From Corollary 2.2 of \cite{KR}, we see that on any Einstein manifold a homothetic vector field is Killing.
Thus Theorem \ref{homo} in particular follows from this result.
However, we prove this here as our discussion focuses using the structure of Damek-Ricci spaces. 


\item[(3)]
The system of partial differential equations \eqref{(1,1)'}--\eqref{(4,3)'} exhibits the following structural properties.
First, consider the change of variable $e^a = w$, and define new functions $F_3 = w f_3$ and $F_4 = w f_4$.  
Then, equations \eqref{(3,3)'} and \eqref{(4,3)'} can be rewritten in terms of the variables $(z, w) \in \mathbb{R} \times \mathbb{R}_+$, and the pair $(F_3, F_4)$ satisfies the Cauchy--Riemann equations:
\[
\dfrac{\partial F_3}{\partial z}
= \dfrac{\partial F_4}{\partial w}, \quad
\dfrac{\partial F_3}{\partial w}
= -\dfrac{\partial F_4}{\partial z}.
\]


\item[(4)]
Second, define the differential operators
\[
D_x := \dfrac{\partial}{\partial x} - \dfrac{y}{2} \dfrac{\partial}{\partial z}, \quad
D_y := \dfrac{\partial}{\partial y} + \dfrac{x}{2} \dfrac{\partial}{\partial z}.
\]
Then equations \eqref{(1,1)'} and \eqref{(2,2)'} yield $D_x f_1 = D_y f_2$, while equation \eqref{(2,1)'} gives $D_y f_1 + D_x f_2 = 0$.
These imply that the pair $(f_1, f_2)$ satisfies a Cauchy--Riemann-type system with respect to $D_x$ and $D_y$.
\end{itemize}
\end{rem}

\vspace{0.1in}

Now we show that radial harmonic functions in the $zw$-plane give rise to Killing vector fields.

\begin{prop}\label{radial}
If $F_3$ is a radial harmonic function on the $zw-$plane, then $\xi$
is a Killing vector field on $\mathbb{C}H^2$.
\end{prop}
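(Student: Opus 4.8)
The plan is to exploit the two Cauchy--Riemann structures recorded in the remark following Theorem~\ref{homo}. Since $(F_3,F_4)=(e^{a}f_3,e^{a}f_4)$ satisfies the Cauchy--Riemann equations in $(z,w)$ with $w=e^{a}>0$, the function $F_3+iF_4$ is holomorphic on the upper half-plane $\{w>0\}$. Assuming $F_3$ radial, the only radial harmonic functions are $F_3=c\log r+d$ with $r=\sqrt{z^{2}+w^{2}}$, whence $F_3+iF_4=c\log\zeta+(d+ie)$ with $\zeta=z+iw$, and therefore $F_4=c\,\theta+e$ with $\theta=\arg\zeta\in(0,\pi)$, for constants $c,d,e$. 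In the left-invariant frame this says $f_3=(c\log r+d)e^{-a}$ and $f_4=(c\theta+e)e^{-a}$, where $r$ and $\theta$ depend only on $(z,a)$.

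I would then feed this into \eqref{(4,1)'} and \eqref{(4,2)'}. Because $\theta$ does not involve $x,y$, one finds $D_xf_4=\frac{cy}{2(z^{2}+e^{2a})}$ and $D_yf_4=-\frac{cx}{2(z^{2}+e^{2a})}$, so those equations become first-order ODEs in $a$ for $e^{a/2}f_1$ and $e^{a/2}f_2$. Integrating, $e^{a/2}f_1=-\frac{cy}{2}\,\Phi(z,a)+g_1(x,y,z)$ and $e^{a/2}f_2=\frac{cx}{2}\,\Phi(z,a)+g_2(x,y,z)$, where $\Phi(z,a):=\int_{0}^{a}\frac{e^{s}}{z^{2}+e^{2s}}\,ds$ is a globally smooth antiderivative of $\frac{e^{a}}{z^{2}+e^{2a}}$ and $g_1,g_2\in C^{\infty}$ do not depend on $a$ (this follows by differentiating their defining expressions in $a$).

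The decisive step is to substitute these into the off-diagonal equation \eqref{(2,1)'}, i.e.\ $D_yf_1+D_xf_2=0$. The $g_i$-terms survive as $D_yg_1+D_xg_2$, the $\Phi$-terms from the two summands add up, and the identity reduces to $D_yg_1+D_xg_2=\frac{cxy}{2}\,\partial_z\Phi(z,a)$. The left side is independent of $a$, while $\partial_z\Phi(z,a)=-2z\int_{0}^{a}\frac{e^{s}}{(z^{2}+e^{2s})^{2}}\,ds$ has $a$-derivative $-2z\,e^{a}(z^{2}+e^{2a})^{-2}\neq0$ for every fixed $z\neq0$; evaluating at a point with $xy\neq0$, $z\neq0$ forces $c=0$. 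Once $c=0$ we have $F_3\equiv d$, $F_4\equiv e$, hence $f_3=de^{-a}$, $f_4=ee^{-a}$, and $f_1=e^{-a/2}g_1$; then \eqref{(1,1)'} reduces to $e^{-a/2}D_xg_1=e^{-a}\,\partial_a(ee^{-a/2})=-\frac{e}{2}e^{-3a/2}$, i.e.\ $D_xg_1=-\frac{e}{2}e^{-a}$, whose left side is $a$-independent, so $e=0$. Thus $f_4\equiv0$, and by \eqref{(4,4)} the potential $\rho=\partial f_4/\partial a$ vanishes: $\xi$ is Killing.

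The work here is bookkeeping rather than conceptual: one has to keep careful track of which arguments each $f_i$ depends on and handle the non-commuting frame fields $D_x,D_y$ (with $[D_x,D_y]=\partial_z$). The one genuine subtlety I would flag is the choice of antiderivative: the naive primitive $\frac1z\arctan(e^{a}/z)$ of $\frac{e^{a}}{z^{2}+e^{2a}}$ is singular along $z=0$, so to keep $f_1,f_2$ smooth on all of $\mathbb{R}^{4}$ one must work with $\Phi(z,a)=\int_{0}^{a}(\cdots)\,ds$; it is precisely the smoothness of $\Phi$ together with the genuine $a$-dependence of $\partial_z\Phi$ that legitimizes the ``$a$-independent equals $a$-dependent'' contradiction that kills $c$, and an analogous one-variable argument that then kills $e$.
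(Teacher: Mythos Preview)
Your argument is internally sound, but its scope is narrower than the paper's. You take $c,d,e$ to be absolute constants, whereas in the ambient setup $f_3,f_4\in C^\infty(\mathbb{R}^4)$, so ``$F_3$ radial in the $zw$-plane'' means radial in $(z,w)$ for each fixed $(x,y)$; the paper accordingly writes $F_3=\tfrac12 C_0(x,y)\log(z^2+w^2)+C_1(x,y)$ and $F_4=C_0(x,y)\arctan(w/z)+C_2(x,y)$. With $(x,y)$-dependent coefficients your formulas for $D_xf_4,D_yf_4$ acquire extra $c_x\theta+e_x$ and $c_y\theta+e_y$ terms, and after integrating \eqref{(4,1)'} one picks up, for instance, a $-e_x\,a$ contribution in $e^{a/2}f_1$ and a $-c_x\!\int_0^a\theta\,ds$ term. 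Your ``$a$-independent $=$ $a$-dependent'' contradiction via \eqref{(2,1)'} and then \eqref{(1,1)'} should still go through, but the bookkeeping must be redone at that level of generality; as written, it does not cover the case the paper treats.

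That said, your route to killing the logarithmic coefficient is genuinely different from the paper's, and in one respect more careful. The paper dispatches $C_0$ by asserting that $\arctan(e^a/z)$ is undefined as $z\to0$; but, exactly as you flag in your last paragraph, the harmonic conjugate of $\log r$ on the upper half-plane $\{w>0\}$ is the principal argument $\theta\in(0,\pi)$, which \emph{is} smooth across $z=0$---it is only the particular branch $\arctan(w/z)$ that jumps there. So smoothness alone does not force $C_0=0$, and a PDE argument of your type (feeding the smooth $f_4=(c\theta+e)e^{-a}$ through \eqref{(4,1)'}, \eqref{(4,2)'} and then extracting a contradiction from \eqref{(2,1)'}) is what is actually needed. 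After that both proofs proceed the same way, using \eqref{(4,1)'} and \eqref{(1,1)'} to force the remaining constant in $f_4$ to vanish; the paper's closing remark that the Cauchy--Riemann equations then give $C_1=0$ is not needed for the Killing conclusion, since $\rho=\partial_af_4$ already vanishes once $f_4$ is $a$-independent.
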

\begin{proof}
Now suppose that $F_3$ is a radial harmonic function, then
by the characterization of radial harmonic functions, 
we have that 
$$F_3(x, y, z,w) = \frac{C_0(x, y)}{2} \log(z^2+{w}^2) + C_1(x,y).$$
Because the CR equations are satisfied, we have that 
$$F_{4}(x, y, z, w) =  C_0(x, y) \arctan\left(\frac{w}{z}\right)+C_2(x,y). $$
{Here $C_1,C_2$ are arbitrary functions depending only on $(x,y)$}.
Consequently functions
\begin{align*}
f_3(x,y,z,a)=&e^{-a}\left(\frac{C_0(x, y)}{2}\log \left(e^{2 a}+z^2\right)
+C_1(x,y)\right),\\
f_4(x,y,z,a)=&e^{-a}\left(C_0(x, y) \arctan\left(\frac{e^a}{z}\right)+C_2(x,y)\right),
\end{align*}
satisfies \eqref{(3,3)'} and \eqref{(4,3)'} and $C_1$ and $C_2$ are functions of the variables $x$ and $y$.
However, since the function $\arctan\left(\dfrac{e^a}{z}\right)$ is not defined as $z \to 0$, it follows that $C_0$ must vanish.
Hence, 
\begin{align*}
f_3(x,y,z,a)=&e^{-a} C_1(x,y),\\
f_4(x,y,z,a)=&e^{-a} C_2(x,y).
\end{align*}
From equation~\eqref{(4,1)'}, we obtain
\begin{equation*}
\dfrac{\partial}{\partial a}\left(e^{a/2}f_1\right) =
-\dfrac{\partial C_2}{\partial x}.
\end{equation*}
Integrating both sides with respect to $a$, we find that
\begin{equation}\label{f1}
e^{a/2}f_1(x,y,z,a) = -a
\dfrac{\partial C_2}{\partial x} + \phi(x,y,z),
\end{equation}
where $\phi$ is some function of $x,y$ and $z$.
If $a=0$, then the aforementioned equation yields that
$$f_1(x,y,z,0) =\phi(x,y,z).$$
Consequently, (\ref{f1}) yields that
\begin{equation}\label{f1'}
e^{a/2}f_1(x,y,z,a) = -a
\dfrac{\partial C_2}{\partial x} + f_1(x,y,z,0).
\end{equation}
Now differentiating $(\ref{f1'})$ with respect to $x$
and $z$ we obtain that LHS of $(\ref{(1,1)'})$ as:
\begin{equation}\label{f2}
e^{-a/2}\left(-a \dfrac{\partial^2C_2}{\partial x^2}
 +  \dfrac{\partial f_1}{\partial x}(x,y,z,0) -\dfrac{y}{2}\dfrac{\partial f_1}{\partial z}(x,y,z,0) \right).
\end{equation}
on the other hand RHS of $(\ref{(1,1)'})$ is:
\begin{equation}\label{f3}
- \dfrac{e^{-3a/2}}{2} C_2(x,y). 
\end{equation}
Comparing (\ref{f2}) and (\ref{f3}), and dividing both sides by $a$, we obtain that 
as $a \rightarrow \infty$, 
$\dfrac{\partial^2C_2}{\partial x^2} = 0.$ 
Again using (\ref{f2}) we have,
\begin{equation*}
\dfrac{\partial f_1}{\partial x}(x,y,z,0) -\dfrac{y}{2}\dfrac{\partial f_1}{\partial z}(x,y,z,0)  = - \dfrac{e^{-a}}{2} C_2(x,y). 
\end{equation*}
Consequently, it follows that 
$C_2(x,y) = 0$ and hence, Cauchy-Riemann equations 
imply that $C_1(x,y) = 0$ as well.
\end{proof}

\begin{rem}
It can be proved that any Killing vector field on a simply connected, complete, non-compact, non-flat harmonic manifold can not be of constant length or a closed Killing vector field.
In particular, this holds for $\mathbb{C}H^2$.
If $\xi$ is a Killing vector field, then
$\Delta h =  ||\nabla  \xi||^2  -  \mathrm{Ric}(\xi,\xi)$, where $h  :=  \frac{1}{2} g(\xi,\xi)  =  \frac{1}{2} ||\xi||^2$.
But if $\xi$ is of constant length, then it implies that  $\xi$ is a parallel vector field. 
This implies that the harmonic manifold is reducible, hence flat. 
Also, by \cite{NGSB}, it follows that it can't be a closed  vector field.
\end{rem}



Now, for a more general result, we study harmonic functions on the plane.

\section{Triviality of conformal vector fields on $\mathbb{C}H^2$}

In this section, we prove that any conformal vector field on the complex hyperbolic plane $\mathbb{C}H^2$ must, in fact, be Killing.
  
We aim to show that $f_4$ is $a$-independent, which immediately implies, via \eqref{(4,4)}, that the potential $\rho$ of the conformal vector field $\xi$ vanishes,
so $\xi$ becomes a Killing field.
In Section \ref{pre}, we introduce a harmonic polynomial expansion for $F_4$.
Section \ref{mrp} then states our main result (Theorem \ref{mainresult}) and its proof,
culminating in the corollary that $\xi$ is Killing.

\subsection{Series representation of $f_4$ via harmonic polynomials}
\label{pre}

The functions $f_3$ and $f_4$ are originally defined as functions of four variables $(x,y,z,a)$, and satisfy a system of partial differential equations.  
As observed above, by introducing the new variables $F_3 = e^a f_3$ and $F_4 = e^a f_4$, and restricting attention to $(z, w)$ with $w = e^a$, these functions become harmonic with respect to $z$ and $w$.
Importantly, however, they remain parametrized by $(x, y)$.  
This allows us to consider $F_3$ and $F_4$ as families of harmonic functions on the $(z, w)$-plane, depending smoothly on the parameters $(x, y)$.

Since each of these functions is harmonic in $(z, w)$, it follows that they admit expansions in terms of harmonic polynomials with coefficients depending on $(x, y)$.
More precisely, by the well-known completeness of harmonic polynomials, any harmonic function defined in a domain can be uniquely expanded in an infinite series of harmonic polynomials, which converges uniformly on compact subsets.
Therefore, we adopt this representation to analyze $F_3$ and $F_4$,
as it allows us to systematically examine their structure while avoiding unnecessary complexity from non-harmonic terms such as logarithmic components.
Moreover, Proposition 5.5 in \cite{ABR} ensures that any polynomial function can be decomposed as the sum of a harmonic polynomial and a multiple of the squared Euclidean norm, $x_1^2 + \cdots + x_n^2$.
In particular, by restricting our attention to expansions in harmonic polynomials, we can study the structure of $F_3$ and $F_4$ more transparently.
For instance, in the case of $n=2$, Theorem 5.25 in \cite{ABR} states that the basis for harmonic polynomials consists of powers of $\zeta=z+iw$ and $\bar{\zeta}$.
This approach enables us to isolate the core aspects of the problem, avoiding potential difficulties arising from multi-valued or singular components.
The subsequent sections therefore proceed under this framework, focusing on the implications and properties of these harmonic expansions.

We set functions $F_3^{[m]}$ and $F_4^{[m]}$ as
$$
(C_1^{[m]} + i C_2^{[m]}) (z + i w)^m=F_3^{[m]} + i F_4^{[m]},
$$
i.e., 
\begin{align*}
F_3^{[m]}= &\Re\left[ (C_1^{[m]} + i C_2^{[m]}) (z + i w)^m\right],\\
F_4^{[m]}= &\Im\left[ (C_1^{[m]} + i C_2^{[m]}) (z + i w)^m\right],
\end{align*}
where $C_i^{[m]}=C_i^{[m]}(x,y)$, $i=1, 2$, are smooth functions defined on the $(x,y)$-plane $\mathbb{R}^2$.
This decomposition allows us to systematically analyze the real and imaginary parts of the function through harmonic polynomial expansions.
A straightforward manipulation then yields
\begin{align}
F_3^{[m]}
= & \frac{1}{2} \left\{ (C_1^{[m]} + i C_2^{[m]}) (z + i w)^m + (C_1^{[m]} - i C_2^{[m]}) (z - i w)^m\right\},\notag\\
=& \frac{1}{2} \left\{ C_1^{[m]}((z + i w)^m+(z - i w)^m)
+i C_2^{[m]}((z + i w)^m-(z - iw)^m)\right\}.\label{f3_o}
\end{align}
Next, we invoke the binomial expansion of $(z \pm i w)^m$,
which can be written as
\begin{equation*}
(z \pm i w)^m
= \sum_{k=0}^{m} \binom{m}{k} z^{m-k} (\pm i w)^k
= \sum_{k=0}^{m} \binom{m}{k} z^{m-k}\,i^{\pm k}\, w^k,
\end{equation*}
where we have used
\begin{equation*}
(\pm i)^k=\left(i^{\pm 1}\right)^k=i^{\pm k}
\end{equation*}
up to signs that align with the binomial terms.
Each term in this expansion corresponds to a specific harmonic component in the decomposition.
To handle the powers of $i$,
we further utilize the standard form of Euler’s formula,
$$
i^\theta=\cos \frac{\pi}{2}\theta + i \sin \frac{\pi}{2}\theta.
$$
Combining these expressions reveals that all relevant real and imaginary contributions stem from the trigonometric parts $\cos\left(\frac{\pi}{2}k\right)$ and $\sin\left(\frac{\pi}{2}k\right)$.
Consequently, upon collecting real terms, one obtains
\begin{equation*}
F_3^{[m]}=\sum_{k=0}^{m} \binom{m}{k} z^{m-k} w^k \left(C_1^{[m]} \cos \frac{\pi}{2}k - C_2^{[m]} \sin \frac{\pi}{2}k \right).
\end{equation*}
Hence, $F_3^{[m]}$ can be expressed as a finite sum of monomials in $z$ and $w$,
each weighted by trigonometric coefficients determined by $\cos\left(\frac{\pi}{2}k\right)$ and $\sin\left(\frac{\pi}{2}k\right)$.
This decomposition makes explicit the real-part structure inherent in $(z+iw)^m$ under the given linear combination involving $C_1^{[m]}$ and $C_2^{[m]}$.
Likewise, $F_4^{[m]}$ takes the analogous form given by
\begin{equation*}
F_4^{[m]}
=\sum_{k=0}^{m} \binom{m}{k} z^{m-k} w^k \left(C_1^{[m]} \sin \frac{\pi}{2}k + C_2^{[m]} \cos \frac{\pi}{2}k \right).
\end{equation*}

Using $F_{3}^{[m]}$ and $F_{4}^{[m]}$,
the functions $f_{3}$ and $f_{4}$ can be represented by the following infinite series.
\begin{align}
f_3= &e^{-a}\left(\sum_{m=1}^{\infty}F_{3}^{[m]}+ C_3\right)\notag\\
=& e^{-a}\left\{\sum_{m=1}^{\infty}\left(\sum_{k=0}^{m} \binom{m}{k} z^{m-k} e^{k a} \left(C_1^{[m]} \cos \frac{\pi}{2}k - C_2^{[m]} \sin \frac{\pi}{2}k \right)\right) + C_3\right\},\label{f3simple}\\
f_4 = & e^{-a}\left(\sum_{m=1}^\infty F_4^{[m]}+ C_4\right)\notag\\
= & e^{-a}\left\{\sum_{m=1}^\infty\left(\sum_{k=0}^{m} \binom{m}{k} z^{m-k} e^{k a} \left(C_1^{[m]} \sin \frac{\pi}{2}k + C_2^{[m]} \cos \frac{\pi}{2}k \right)\right) + C_4\right\},\label{f4simple}
\end{align}
where $C_3=C_3(x,y)$ and $C_4=C_4(x,y)$ are some functions on $(x,y)$-plane $\mathbb{R}^2$.

\begin{rem}
It should be noted that (\ref{f3simple}) and (\ref{f4simple}) they represent power series in the neighbourhood of zero, consequently the solution pair $f_3$ and $f_4$ is in a neighbourhood of zero.
The solution in any neighbourhood around  some point $z_0 + i w_0 $ is given by replacing $z + i w$ by $(z-z_0) + i (w - w_0)$. Hence, the above computations remains unaltered.
\end{rem}

\subsection{Main result and its proof}
\label{mrp}

We now state our main result.

\begin{thm}\label{mainresult}
Under the assumptions given above, $C_{4} = 0$.
Moreover, for every positive integer $m$,
$C_{1}^{[m]} = 0$, and for all positive integer $m \ge 3$, we have $C_{2}^{[m]} = 0$.
\end{thm}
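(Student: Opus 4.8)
The strategy is to feed the harmonic–polynomial expansions \eqref{f3simple}, \eqref{f4simple} into the seven equations \eqref{(1,1)'}, \eqref{(2,1)'}, \eqref{(2,2)'}, \eqref{(3,1)}, \eqref{(3,2)}, \eqref{(4,1)'}, \eqref{(4,2)'} that have not yet been used — only \eqref{(3,3)'}, \eqref{(4,3)'} have been consumed, to produce the harmonicity of $F_3,F_4$. It is convenient to pass once and for all to $w=e^{a}$ and to set $H_i:=e^{a/2}f_i$ ($i=1,2$); using $\partial_a=w\partial_w$ and $e^{a}D_xf_4=D_xF_4$ (and likewise for $f_3$), the seven equations take the compact form
\begin{align*}
w\,\partial_w H_1 &= -D_xF_4, & w\,\partial_w H_2 &= -D_yF_4,\\
D_xH_1=\partial_wF_4-\tfrac{1}{2w}F_4 &= D_yH_2, & D_yH_1+D_xH_2 &= 0,\\
w\,\partial_zH_1+D_xF_3+H_2 &= 0, & w\,\partial_zH_2+D_yF_3-H_1 &= 0,
\end{align*}
to be read together with the Cauchy--Riemann relations $\partial_zF_3=\partial_wF_4$, $\partial_wF_3=-\partial_zF_4$ and the commutator identity $D_yD_x-D_xD_y=-\partial_z$. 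Every function occurring here is, locally, a polynomial in $(z,w)$ with coefficients smooth in $(x,y)$.

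The first step is to integrate the top pair in $w$. Let $\Phi:=C_4+\sum_{m\ge1}C_2^{[m]}z^{m}$ be the $w^{0}$–component of $F_4$. Integration of $w\,\partial_wH_1=-D_xF_4$ shows that $H_1$ is a polynomial in $w$ (each $w^{q}$ with $q\ge1$ divided by $q$) plus $-(D_x\Phi)\log w$ plus a $w$–free remainder, and similarly for $H_2$ with $D_y\Phi$. Substituting into \eqref{(3,1)}, \eqref{(3,2)}: since $D_xF_3$, $D_yF_3$ carry no $\log w$, and the $\log w$ part of $w\,\partial_zH_i$ sits at order $w^{1}$ while that of $H_j$ sits at order $w^{0}$, matching the $w^{0}\log w$ coefficients forces $D_x\Phi=D_y\Phi=0$; the commutator identity then gives $\partial_z\Phi=0$, hence $\sum_{m\ge1}mC_2^{[m]}z^{m-1}=0$, so $C_2^{[m]}=0$ for every $m\ge1$, and then $\partial_xC_4=\partial_yC_4=0$. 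Finally the part of \eqref{(1,1)'} supported on the $(z,w)$–constant piece of $F_4$ demands a $\tfrac{C_4}{2w}$ term on its right side, while \eqref{(4,1)'} makes that piece of $H_1$ independent of $w$; this contradiction forces $C_4=0$.

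After this reduction $F_4=\sum_{m\ge1}C_1^{[m]}(x,y)\,Q_m(z,w)$ with $P_m=\Re(z+iw)^m$, $Q_m=\Im(z+iw)^m$, and it remains to eliminate the $C_1^{[m]}$. Cross-differentiating \eqref{(1,1)'} and \eqref{(4,1)'} (apply $D_x$ to the latter, $w\partial_w$ to the former, and use $\partial_w^{2}F_4=-\partial_z^{2}F_4$) yields
\[
\bigl(2w^{2}\partial_w^{2}-w\partial_w+1\bigr)F_4=-2w\,D_x^{2}F_4,
\]
together with the analogous identity with $D_y^{2}$ in place of $D_x^{2}$; hence $D_x^{2}F_4=D_y^{2}F_4$, and combining with \eqref{(2,1)'} one also gets $(D_x+iD_y)^{2}F_4=0$. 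The homogeneous operator $2w^{2}\partial_w^{2}-w\partial_w+1$ annihilates exactly $w^{1/2}$ and $w$, which is precisely why the recursion it induces on the coefficients only bites for $m\ge3$. Expanding the displayed identity in the $Q_m$ — using $\partial_wQ_m=mP_{m-1}$, $\partial_w^{2}Q_m=-m(m-1)Q_{m-2}$, $Q_m/w=\sum_{j=0}^{m-1}z^{j}P_{m-1-j}$, and the corresponding expansion of $D_x^{2}F_4$ — and matching coefficients of $z^{p}w^{q}$ produces a recursion which, closed off by $(D_x+iD_y)^{2}F_4=0$ together with \eqref{(3,1)}, \eqref{(3,2)} and the harmonicity of $F_3$, forces $C_1^{[m]}=0$ for all $m\ge3$ and then also for $m=1,2$. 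Assembling these reductions gives $C_4=0$, $C_1^{[m]}=0$ for every $m$, and $C_2^{[m]}=0$ for $m\ge3$, which is the assertion of Theorem~\ref{mainresult}.

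The main obstacle is this last step. Because $D_x=\partial_x-\tfrac y2\partial_z$ mixes $\partial_x$ (degree-preserving in $(z,w)$) with $\partial_z$ (degree-lowering), the recursion is not triangular in the single index $m$: it couples the harmonic-polynomial degree with the order of the $(x,y)$-derivatives of the $C_i^{[m]}$. One therefore has to carry the low-degree anomaly stemming from the two-dimensional kernel $\{w^{1/2},w\}$ of $2w^{2}\partial_w^{2}-w\partial_w+1$, which is exactly where the boundary cases $m\le2$ get pinned down; checking that the full collection of relations is mutually consistent and leaves no residual freedom in the $C_i^{[m]}$ is the delicate part.
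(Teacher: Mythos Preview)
Your first step is correct and is a pleasant variant of the paper's argument: rather than expanding the right–hand side of \eqref{16} and isolating the $e^{-a}$–block, you read off the obstruction as the $\log w$ term produced when $w\,\partial_wH_i=-D_{x,y}F_4$ is integrated, and kill it via the $w^{0}\log w$ coefficients of \eqref{(3,1)}, \eqref{(3,2)}. This cleanly gives $D_x\Phi=D_y\Phi=0$, hence $\partial_z\Phi=0$ (commutator), hence $C_2^{[m]}=0$ for every $m\ge1$, and then the $w^{-1}$ coefficient of \eqref{(1,1)'} forces $C_4=0$. So far so good, and slicker than the paper's direct expansion.

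The genuine gap is in your second step, and the conclusion you draw there is actually \emph{false}. You assert that the recursion coming from
\[
(2w^{2}\partial_w^{2}-w\partial_w+1)F_4=-2w\,D_x^{2}F_4
\]
forces $C_1^{[m]}=0$ for every $m$, ``and then also for $m=1,2$''. It cannot: the left invariant field $A$ is Killing with $f_4\equiv1$, hence $F_4=w=Q_1$, i.e.\ $C_1^{[1]}=1$. More to the point, your own operator analysis shows why $m=1,2$ survive. Both $Q_1=w$ and $Q_2=2zw$ lie in the kernel of $2w^{2}\partial_w^{2}-w\partial_w+1$ (not just $w^{1/2}$ and $w$; any $z$–polynomial times $w$ is annihilated), so the identity reduces on the span of $Q_1,Q_2$ to $D_x^{2}F_4=0$. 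Writing this out one finds exactly the constraints
\[
\partial_x^{2}C_1^{[2]}=0,\qquad \partial_x^{2}C_1^{[1]}-2y\,\partial_xC_1^{[2]}=0,
\]
together with their $D_y$–analogues, which is precisely \eqref{C1[1][2]-1} of the paper's Remark; these pin $C_1^{[1]},C_1^{[2]}$ down to low–degree polynomials but do \emph{not} force them to vanish. Consequently your ``closure'' of the recursion cannot succeed as stated; since you never exhibit the recursion explicitly (you yourself flag this as ``the delicate part''), the error is hidden, but any honest completion would leave $C_1^{[1]},C_1^{[2]}$ free.

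The source of the confusion is that the theorem as printed has the roles of $C_1$ and $C_2$ interchanged: what is actually proved (and what the subsequent Corollary and Remark use) is $C_2^{[m]}=0$ for all $m\ge1$ and $C_1^{[m]}=0$ for $m\ge3$, with $C_1^{[1]},C_1^{[2]}$ surviving to parametrize the $A$–component of Killing fields. The paper obtains $C_1^{[m]}=0$ for $m\ge3$ by isolating the $k=2$ block (i.e.\ the $e^{2a}$ coefficient) in \eqref{16mk}, which after the previously established $C_2^{[m]}=0$ collapses to $\tfrac{5}{12}(m+1)(m-1)C_1^{[m+1]}=0$. Your operator identity is a repackaging of the same information, but you would need to carry out the coefficient matching carefully for $m\ge3$ and stop there rather than overreach to $m=1,2$.
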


\begin{proof}
To simplify the notation in what follows, we define
\[
\mathcal{C}^{(p)}_{[m,k]}
\;:=\;
\dfrac{\partial^p C_1^{[m]}}{\partial x^p}\sin\dfrac{\pi}{2} k
+
\dfrac{\partial^p C_2^{[m]}}{\partial x^p}\cos\dfrac{\pi}{2} k,
\]
for integers $m \ge 1$, $k \ge 0$, and $p = 0, 1, 2$.

From equations \eqref{(4,1)'} and \eqref{(4,2)'}, $f_1$ and $f_2$ can be determined in terms of $f_4$.
For instance, $f_1$ can be expressed by the following integral formula:
\begin{equation}\label{f1_0}
e^{a/2}f_1=-\int e^a\left(\dfrac{\partial f_4}{\partial x}-\dfrac{y}{2}\dfrac{\partial f_4}{\partial z}\right)\,da+C_5(x,y,z),
\end{equation}
where $C_5(x,y,z)$ is an integration “constant” that may depend on $(x,y,z)$ but not on $a$.
Equation \eqref{f1_0} could be integrated further to obtain an explicit expression for $f_1$.
However, we will not pursue that computation here; instead, we proceed with the proof of our main result.

From \eqref{(1,1)'}, we have
\begin{align}\label{16}
0=& - \dfrac{\partial}{\partial x}\left(e^{a/2} f_1\right)
+\dfrac{y}{2}\dfrac{\partial}{\partial z}\left(e^{a/2} f_1\right)
+e^{-a/2}\dfrac{\partial}{\partial a}\left(e^{a/2} f_4\right).
\end{align}
The first and second terms on the right-hand side of \eqref{16} become
$$
\int e^a\left(
\dfrac{\partial^2 f_4}{\partial x^2}
-y\dfrac{\partial^2 f_4}{\partial x \partial z}
+\dfrac{y^2}{4}\dfrac{\partial^2 f_4}{\partial z^2}
\right) da
-\dfrac{\partial C_5}{\partial x}
+\dfrac{y}{2} \dfrac{\partial C_5}{\partial z}.
$$
By computing each term of the above equation, we obtain the following:
\begin{align*}
\int e^{a}\dfrac{\partial^2 f_4}{\partial x^2} da
=&\int \left\{\sum_{m=1}^\infty\left(\sum_{k=0}^{m} \binom{m}{k} z^{m-k} e^{k a} \mathcal{C}^{(2)}_{[m,k]} + \dfrac{\partial^2C_4}{\partial x^2}\right)\right\} da\\
=&\sum_{m=1}^\infty\left(\sum_{k=1}^{m} \binom{m}{k}\dfrac{1}{k} z^{m-k} e^{k a} \mathcal{C}^{(2)}_{[m,k]}\right) + a \left(
\sum_{m=1}^\infty z^m \dfrac{\partial^2 C^{[m]}_2}{\partial x^2}
+ \dfrac{\partial^2C_4}{\partial x^2}
\right).
\end{align*}

Now, after some computations and simplifications, we affirm
\begin{align*}
\int e^{a}\dfrac{\partial^2 f_4}{\partial x\partial z} da
=&\dfrac{\partial}{\partial z}\int \left\{\sum_{m=1}^\infty\left(\sum_{k=0}^{m} \binom{m}{k} z^{m-k} e^{k a} \mathcal{C}^{(1)}_{[m,k]}\right) + \dfrac{\partial C_4}{\partial x}\right\} da\\
=&\sum_{m=1}^\infty\left(\sum_{k=1}^{m} \binom{m+1}{k}\dfrac{m-k+1}{k} z^{m-k} e^{k a} \mathcal{C}^{(1)}_{[m+1,k]}\right)
+a \sum_{m=1}^\infty m z^{m-1} \dfrac{\partial C_2^{[m]}}{\partial x}.
\end{align*}
And,
\begin{align*}
\int e^{a}\dfrac{\partial^2 f_4}{\partial z^2} da
=&\dfrac{\partial^2}{\partial z^2}
\int
\left\{\sum_{m=1}^\infty\left(\sum_{k=0}^{m} \binom{m}{k} z^{m-k} e^{k a} \mathcal{C}^{(0)}_{[m,k]}\right)\right\}
da\\
=& a \sum_{m=1}^\infty (m+1)m z^{m-1} C_2^{[m+1]}\\
&+\sum_{m=1}^\infty\left(\sum_{k=1}^{m} \binom{m+2}{k} \dfrac{(m-k+2)(m-k+1)}{k}z^{m-k} e^{k a} \mathcal{C}^{(0)}_{[m+2,k]}\right).
\end{align*}
Furthermore, the third term of \eqref{16} is given by the following:
\begin{align*}
e^{-a/2}\dfrac{\partial}{\partial a}\left(e^{a/2} f_4\right)
=&\dfrac{1}{2}f_4+\dfrac{\partial f_4}{\partial a}\\
=& \sum_{m=1}^\infty\left(\sum_{k=0}^{m} \binom{m}{k} \left(k-\dfrac{1}{2}\right)z^{m-k} e^{(k - 1)a} \mathcal{C}^{(0)}_{[m,k]}\right)-\dfrac{1}{2}e^{-a} C_4\\
=&\sum_{m=1}^\infty\left(\sum_{k=1}^{m} \binom{m+1}{k+1} \left(k+\dfrac{1}{2}\right)z^{m-k} e^{k a} \mathcal{C}^{(0)}_{[m+1,k+1]}\right)\\
&+\dfrac{1}{2}\sum_{m=1}^\infty m z^{m-1} C^{[m]}_1
-\dfrac{1}{2} e^{-a}\sum_{m=1}^\infty z^m C^{[m]}_2
-\dfrac{1}{2}e^{-a} C_4.
\end{align*}
Thus, equation \eqref{16} can be rewritten as follows:
\begin{align*}
0=&
\sum_{m=1}^\infty\left(\sum_{k=1}^{m} \binom{m}{k}\dfrac{1}{k} z^{m-k} e^{k a} \mathcal{C}^{(2)}_{[m,k]}\right)
+ a \left(
\sum_{m=1}^\infty z^m \dfrac{\partial^2 C^{[m]}_2}{\partial x^2}
+ \dfrac{\partial^2C_4}{\partial x^2}
\right)\\
&-y\sum_{m=1}^\infty\left(\sum_{k=1}^{m} \binom{m+1}{k}\dfrac{m-k+1}{k} z^{m-k} e^{k a} \mathcal{C}^{(1)}_{[m+1,k]}\right)\\
&-ay \sum_{m=1}^\infty m z^{m-1} \dfrac{\partial C_2^{[m]}}{\partial x}
+\dfrac{ay^2}{4} \sum_{m=1}^\infty (m+1)m z^{m-1} C_2^{[m+1]}\\
&+ \dfrac{y^2}{4}\sum_{m=1}^\infty\left(\sum_{k=1}^{m} \binom{m+2}{k} \dfrac{(m-k+2)(m-k+1)}{k}z^{m-k} e^{k a} \mathcal{C}^{(0)}_{[m+2,k]}\right)
-\dfrac{\partial C_5}{\partial x}
+\dfrac{y}{2} \dfrac{\partial C_5}{\partial z}\\
&+\sum_{m=1}^\infty\left(\sum_{k=1}^{m} \binom{m+1}{k+1} \left(k+\dfrac{1}{2}\right)z^{m-k} e^{k a} \mathcal{C}^{(0)}_{[m+1,k+1]}\right)\\
&+\dfrac{1}{2}\sum_{m=1}^\infty m z^{m-1} C^{[m]}_1
-\dfrac{1}{2} e^{-a}\sum_{m=1}^\infty z^m C^{[m]}_2
-\dfrac{1}{2}e^{-a} C_4.
\end{align*}
That is,
\begin{align*}
0=
&\sum_{m=1}^\infty \sum_{k=1}^{m} z^{m-k} e^{k a}
\left\{
\binom{m}{k}\dfrac{1}{k} \mathcal{C}^{(2)}_{[m,k]}
-y \binom{m+1}{k}\dfrac{m-k+1}{k} \mathcal{C}^{(1)}_{[m+1,k]}\right.\\
&\left.+ \dfrac{y^2}{4} \binom{m+2}{k} \dfrac{(m-k+2)(m-k+1)}{k} \mathcal{C}^{(0)}_{[m+2,k]}
+ \binom{m+1}{k+1} \left(k+\dfrac{1}{2}\right) \mathcal{C}^{(0)}_{[m+1,k+1]}\right\}\\
&+ a \left\{
\sum_{m=1}^\infty z^m \dfrac{\partial^2 C^{[m]}_2}{\partial x^2}
+ \dfrac{\partial^2C_4}{\partial x^2}
+\sum_{m=1}^\infty m z^{m-1}\left(-y \dfrac{\partial C_2^{[m]}}{\partial x}
+ \dfrac{m+1}{4} y^2 C_2^{[m+1]}\right)
\right\}\\
&-\dfrac{1}{2}e^{-a}\left(
\sum_{m=1}^\infty z^m C^{[m]}_2+C_4
\right)
+\dfrac{1}{2}\sum_{m=1}^\infty m z^{m-1} C^{[m]}_1
-\dfrac{\partial C_5}{\partial x}
+\dfrac{y}{2} \dfrac{\partial C_5}{\partial z}.
\end{align*}
The above equation holds for any $a$.
Since the exponential functions $\{e^{ka}\}_{k\in\mathbb{Z}}$ are linearly independent over $\mathbb{R}$,
it follows that each block corresponding to a fixed $k$ must vanish separately.
This yields
\begin{multline}\label{16mk}
\sum_{m=k}^\infty z^{m-k}
\left\{
\binom{m}{k}\dfrac{1}{k} \mathcal{C}^{(2)}_{[m,k]}
-y \binom{m+1}{k}\dfrac{m-k+1}{k} \mathcal{C}^{(1)}_{[m+1,k]}\right.\\
+ \dfrac{y^2}{4} \binom{m+2}{k} \dfrac{(m-k+2)(m-k+1)}{k} \mathcal{C}^{(0)}_{[m+2,k]}\\
\left.+ \binom{m+1}{k+1} \left(k+\dfrac{1}{2}\right) \mathcal{C}^{(0)}_{[m+1,k+1]}\right\}=0,
\end{multline}
for any positive integer $k$ and
\begin{equation}
\label{16-1}
\sum_{m=1}^\infty \left\{z^m \dfrac{\partial^2 C^{[m]}_2}{\partial x^2}
+ m z^{m-1}\left(-y \dfrac{\partial C_2^{[m]}}{\partial x}
+ \dfrac{m+1}{4} y^2 C_2^{[m+1]}\right)\right\}+ \dfrac{\partial^2C_4}{\partial x^2}=0,
\end{equation}
\begin{equation}
\label{16-2}
\sum_{m=1}^\infty z^m C^{[m]}_2+C_4=0,
\end{equation}
\begin{equation}
\label{16-3}
\dfrac{1}{2}\sum_{m=1}^\infty m z^{m-1} C^{[m]}_1
-\dfrac{\partial C_5}{\partial x}
+\dfrac{y}{2} \dfrac{\partial C_5}{\partial z}=0.
\end{equation}
Since the equation \eqref{16-2} holds for any $z$,
we obtain
$$
C_4(x,y)=0,\qquad
C_2^{[m]}(x,y)=0\quad( m=1,2,\ldots).
$$
It is immediately clear that these satisfy equation \eqref{16-1} without contradiction.
On the other hand,
since equation \eqref{16mk} holds for any $z$,
it follows that for any positive integer $k$ and for any $m\ge k$,
\begin{multline*}
\binom{m}{k}\dfrac{1}{k} \mathcal{C}^{(2)}_{[m,k]}
-y \binom{m+1}{k}\dfrac{m-k+1}{k} \mathcal{C}^{(1)}_{[m+1,k]}\\
+ \dfrac{y^2}{4} \binom{m+2}{k} \dfrac{(m-k+2)(m-k+1)}{k} \mathcal{C}^{(0)}_{[m+2,k]}\\
+ \binom{m+1}{k+1} \left(k+\dfrac{1}{2}\right) \mathcal{C}^{(0)}_{[m+1,k+1]}=0,
\end{multline*}
holds.
For $k=2$, we obtain
\begin{multline*}
-\dfrac{m-1}{4}\dfrac{\partial^2 C_2^{[m]}}{\partial x^2}
+\dfrac{(m+1)}{4} y \dfrac{\partial C_2^{[m+1]}}{\partial x}
-\dfrac{(m+2)(m+1)(m-1)}{16} y^2 C_2^{[m+2]}\\
-(m+1)(m-1)\dfrac{5}{12}C_1^{[m+1]}=0,
\end{multline*}
for any $m\ge 2$.
Given $C_2^{[m]}(x,y)=0$ for any positive integer $m$,
it follows that $C_1^{[m]}(x,y)=0$ for $m\ge 3$.
\end{proof}

\begin{note}
Since $F_3$ and $F_4$ are harmonic in $(z,w)$, they are real-analytic.
Hence, in a neighborhood of the origin they admit convergent expansions into homogeneous harmonic polynomials, and these expansions together with their finite $(z,w)$-derivatives converge uniformly.
Therefore termwise differentiation and substitution into the system of equations are justified, and the coefficient comparisons used above are valid.
\end{note}

\begin{rem}
For $k=1$, we obtain
\begin{multline}\label{16mk=1}
\dfrac{\partial^2 C_1^{[m]}}{\partial x^2}
-(m+1) y \dfrac{\partial C_1^{[m+1]}}{\partial x}
+\dfrac{(m+2)(m+1)}{4} y^2 C_1^{[m+2]}\\
-\dfrac{3(m+1)}{4}C_2^{[m+1]}=0,
\end{multline}
for any $m\ge 1$.
From the previous results, equation \eqref{16mk=1} is meaningful only for $m = 1$ and $m = 2$.
In fact, for $m = 3$, all terms in \eqref{16mk=1} vanish.
For $m=1,2$, equation  \eqref{16mk=1} takes the forms:
\begin{equation}\label{C1[1][2]-1}
\dfrac{\partial^2 C_1^{[1]}}{\partial x^2}
-2 y \dfrac{\partial C_1^{[2]}}{\partial x}=0,\qquad
\dfrac{\partial^2 C_1^{[2]}}{\partial x^2}=0.
\end{equation}
Recall that this result was derived from equations \eqref{(1,1)'} and \eqref{(4,1)'}.
A similar argument can be applied to equations (18) and \eqref{(4,2)'},
leading to the following result, which is analogous to equation \eqref{C1[1][2]-1}:
\begin{equation*}\label{C1[1][2]-2}
\dfrac{\partial^2 C_1^{[1]}}{\partial y^2}
+2 x \dfrac{\partial C_1^{[2]}}{\partial y}=0,\qquad
\dfrac{\partial^2 C_1^{[2]}}{\partial y^2}=0,
\end{equation*}
which implies that $C_1^{[1]}$ and $C_1^{[2]}$ are at most first-degree polynomials in $x$ and $y$ (in fact, they are either constant functions or first-degree polynomials).
\end{rem}

\begin{cor}
The potential function $f$ of the conformal vector field $\xi$ is zero.
That is, $\xi$ is a Killing vector field.
\end{cor}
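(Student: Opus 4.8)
The plan is to read the potential off $f_4$ directly. By \eqref{(4,4)} we have $\rho = \partial f_4/\partial a$, so it suffices to prove that, once all the constraints obtained in Theorem~\ref{mainresult} are imposed, the function $f_4$ no longer depends on the variable $a$.

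First I would substitute into the series representation \eqref{f4simple} the vanishing statements just established: $C_4 = 0$, $C_2^{[m]} = 0$ for all $m \ge 1$, and $C_1^{[m]} = 0$ for all $m \ge 3$. Only the blocks $m = 1$ and $m = 2$ can then contribute, and inside each the coefficient of $z^{m-k}e^{ka}$ collapses to $\binom{m}{k}\,C_1^{[m]}\sin\frac{\pi}{2}k$. The selection rule $\sin\frac{\pi}{2}k = 0$ for even $k$ leaves exactly the index $k = 1$ in both $m = 1$ and $m = 2$; since $e^{ka} = e^{a}$ there, this factor cancels the overall prefactor $e^{-a}$. Collecting the two surviving monomials gives $f_4(x,y,z,a) = C_1^{[1]}(x,y) + 2z\,C_1^{[2]}(x,y)$, which is manifestly independent of $a$. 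Note that this already suffices: one need not invoke the sharper fact, discussed in the preceding remark, that $C_1^{[1]}$ and $C_1^{[2]}$ are at most first-degree polynomials in $x$ and $y$. Hence $\partial f_4/\partial a = 0$, and \eqref{(4,4)} forces $\rho \equiv 0$; therefore $\mathcal{L}_\xi g = 2\rho g = 0$, i.e.\ $\xi$ is a Killing vector field.

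I do not expect a serious obstacle here: all the substantive analysis—forcing $C_4$, every $C_2^{[m]}$, and every $C_1^{[m]}$ with $m \ge 3$ to vanish—was carried out in Theorem~\ref{mainresult}, so the corollary is essentially a direct readout. The only care required is the bookkeeping with the trigonometric coefficients $\sin\frac{\pi}{2}k$ over the finitely many remaining terms and the verification that the leftover exponential factors cancel exactly, so that the residual expression for $f_4$ is genuinely $a$-free and the potential is thereby eliminated.
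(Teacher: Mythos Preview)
Your proposal is correct and follows essentially the same route as the paper: substitute the vanishing of $C_4$, of all $C_2^{[m]}$, and of $C_1^{[m]}$ for $m\ge 3$ into \eqref{f4simple}, use the selection $\sin\frac{\pi}{2}k=0$ for even $k$ to isolate $k=1$ in the remaining blocks $m=1,2$, cancel the $e^{a}$ against the prefactor $e^{-a}$, and read off $f_4=C_1^{[1]}(x,y)+2z\,C_1^{[2]}(x,y)$, hence $\rho=\partial f_4/\partial a=0$. Your added remark that the polynomial structure of $C_1^{[1]},C_1^{[2]}$ is not needed for the conclusion is accurate and not stated explicitly in the paper.
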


\begin{proof}
Based on the above considerations, let us explicitly express $f_4$.
From \eqref{f4simple}, we have
\begin{align*}
f_4 = & e^{-a}\left(\sum_{m=1}^2\left(\sum_{k=0}^{m} \binom{m}{k} z^{m-k} e^{k a} C_1^{[m]} \sin \frac{\pi}{2}k\right)\right)\\
=&e^{-a}\left(
\sum_{k=0}^{1} \binom{1}{k} z^{1-k} e^{k a} C_1^{[1]} \sin \frac{\pi}{2}k
+
\sum_{k=0}^{2} \binom{2}{k} z^{2-k} e^{k a} C_1^{[2]} \sin \frac{\pi}{2}k
\right)\\
=&e^{-a}\left(
e^{a} C_1^{[1]}
+
2 z e^{a} C_1^{[2]}
\right)\\
=&C_1^{[1]}(x,y)+2 z C_1^{[2]}(x,y),
\end{align*}
which shows that $f_4$ does not depend on the variable $a$,
implying that from \eqref{(4,4)} the potential function $\rho$ of $\xi$ is zero.
\end{proof}

\section{On a complex hyperbolic space $\mathbb{C}H^n$}

We now extend the arguments of Section \ref{CH2} from $\mathbb{C}H^2$ to the complex hyperbolic space $\mathbb{C}H^n$, $n>2$.
Similar to the $n=2$ case, 
a complex hyperbolic space $\mathbb{C}H^n$ is also the Damek-Ricci space $S=N\ltimes \mathbb{R}A$.
In the general case of $\mathbb{C}H^n$,
the Lie algebra $\mathfrak{n}$ decomposes as $\mathfrak{v}\oplus \mathfrak{z}$ with $\dim\mathfrak{z}=1$ and $\dim \mathfrak{v}=2(n-1)$.
We label an orthonormal basis $\{ V_i, J_Z V_i\}_{i=1\ldots,n-1}$ for $\mathfrak{v}$ and let $Z$ generate $\mathfrak{z}$,
where $V\in \mathfrak{v}$ and $Z\in\mathfrak{z}$ are unit vectors and satisfy
$$
[V_i, V_j]=[J_Z V_i, J_ZV_j]=0,
\quad\mbox{and}\quad
[V_i, J_Z V_j]=\delta_{ij} Z.
$$
Considering the elements of the Lie algebra as left-invariant vector fields on $S$,
the components of the Lie derivative of the metric $g$ are as follows:
\begin{equation*}
\mathcal{L}_{V_i}g(V_j,A)=\frac{1}{2}\delta_{ij},\quad
\mathcal{L}_{V_i}g(J_ZV_j,Z)=-\delta_{ij},
\end{equation*}
\begin{equation*}
\mathcal{L}_{J_ZV_i}g(V_j,Z)=\delta_{ij},\quad
\mathcal{L}_{J_ZV_i}g(J_ZV_j,A)=\dfrac{1}{2}\delta_{ij},
\end{equation*}
\begin{equation*}
\mathcal{L}_{Z}g(Z,A)=1,
\end{equation*}
\begin{equation*}
\mathcal{L}_{A}g(V_i,V_j)=\mathcal{L}_{A}g(J_ZV_i,J_ZV_j)=-\delta_{ij},\quad
\mathcal{L}_{A}g(Z,Z)=-2.
\end{equation*}
Here, $1\le i, j\le n-1$ and all components not listed above are zero.
When the matrix $\mathcal{L}_{V_i}g$ is partitioned into $2\times 2$ blocks,
the $(n, i)$-th block becomes the matrix $\left(
\begin{array}{cc}
0&-1\\
\frac{1}{2}&0
\end{array}
\right)$, and the $(i, n)$-th block becomes its transpose.
Furthermore, all other blocks are zero matrices:
\begin{align*}
\mathcal{L}_{V_i}g
=&\left(
\begin{array}{cc:c:cc:c:cc}
\hspace{15pt}&&\hspace{5pt}&&&\hspace{5pt}&&\\
&&&&&&&\\
\hdashline
&&&&&&&\\
\hdashline
&&&&&&0&\frac{1}{2}\\
&&&&&&-1&0\\
\hdashline
&&&&&&&\\
\hdashline
&&&0&-1&&&\\
&&&\frac{1}{2}&0&&&\\
\end{array}
\right).
\end{align*}
Similarly, when the matrix $\mathcal{L}_{J_ZV_i}g$ is partitioned into blocks, the $(n, i)$-th block and the $(i, n)$-th block become the matrix $\left(
\begin{array}{cc}
1&0\\
0&\frac{1}{2}
\end{array}
\right)$.
\begin{align*}
\mathcal{L}_{J_ZV_i}g
=&\left(
\begin{array}{cc:c:cc:c:cc}
\hspace{15pt}&&\hspace{5pt}&&&\hspace{5pt}&&\\
&&&&&&&\\
\hdashline
&&&&&&&\\
\hdashline
&&&&&&1&0\\
&&&&&&0&\frac{1}{2}\\
\hdashline
&&&&&&&\\
\hdashline
&&&1&0&&&\\
&&&0&\frac{1}{2}&&&\\
\end{array}
\right).
\end{align*}
The matrices $\mathcal{L}_Zg$ and $\mathcal{L}_Ag$ are given by
\begin{align*}
\mathcal{L}_Zg
=&\left(
\begin{array}{ccc:cc}
0&&&&\\
&\ddots&&&\\
&&0&&\\
\hdashline
&&&0&1\\
&&&1&0
\end{array}
\right),&
\mathcal{L}_Ag
=&\left(
\begin{array}{ccc:cc}
-1& & & & \\
 &\ddots& & & \\
 & &-1& & \\
 \hdashline
 & & &-2&0\\
 & & &0&0 
\end{array}
\right).
\end{align*}
As in the case of $\mathbb{C}H^2$,
with respect to the natural coordinate system $(x^1, y^1, \ldots, x^{n-1}, y^{n-1}, z, a)$ determined by the generators of the Lie algebra,
we obtain
$$
V_i=e^{a/2}\left(\dfrac{\partial}{\partial x^i}
-\dfrac{y^i}{2}\dfrac{\partial}{\partial z}\right),\ 
J_ZV_i=e^{a/2}\left(\dfrac{\partial}{\partial y^i}
+\dfrac{x^i}{2}\dfrac{\partial}{\partial z}\right),\ 
Z=e^{a}\dfrac{\partial}{\partial z},\ 
A=\dfrac{\partial}{\partial a},
$$
where $1\le i\le n-1$.
The condition that the vector field 
\begin{equation*}\label{xi1}
\xi=\sum_{i=1}^{n-1}\left(f_{1i}\,V_i+f_{2i}\,J_ZV_i\right)+f_3\,Z+f_4\,A
\end{equation*}
is a conformal vector field is equivalent to the following system of partial differential equations being satisfied:
\begin{align}
\label{(1,1)-i}
2e^{a/2}\left(\frac{\partial f_{1i}}{\partial x^i}-\frac{y^i}{2}\frac{\partial f_{1i}}{\partial z}\right)
-f_4
=&2\rho,\\
\label{(2,1)-i}
e^{a/2}\left(\frac{\partial f_{1i}}{\partial y^i}+\frac{x^i}{2}\frac{\partial f_{1i}}{\partial z}\right)
+
e^{a/2}\left(\frac{\partial f_{2i}}{\partial x^i}-\frac{y^i}{2}\frac{\partial f_{2i}}{\partial z}\right)
=&0,\\
\label{(2,2)-i}
2e^{a/2}\left(\frac{\partial f_{2i}}{\partial y}+\frac{x^i}{2}\frac{\partial f_{2i}}{\partial z}\right)
-f_4
=&2\rho,\\
\label{(3,1)-i}
e^{a} \frac{\partial f_{1i}}{\partial z}
+
e^{a/2}\left(\frac{\partial f_3}{\partial x^i}-\frac{y^i}{2}\frac{\partial f_3}{\partial z}\right)
+f_{2i}
=&0,\\
\label{(3,2)-i}
e^{a} \frac{\partial f_2}{\partial z}
+
e^{a/2}\left(\frac{\partial f_3}{\partial y^i}+\frac{x^i}{2}\frac{\partial f_3}{\partial z}\right)
-f_{1i}
=&0,\\
\label{(3,3)-i}
2e^{a} \frac{\partial f_3}{\partial z}
-2f_4
=&2\rho,\\
\label{(4,1)-i}
\frac{\partial f_{1i}}{\partial a}
+
e^{a/2}\left(\frac{\partial f_4}{\partial x^i}-\frac{y^i}{2}\frac{\partial f_4}{\partial z}\right)
+\dfrac{1}{2}f_{1i}
=&0,\\
\label{(4,2)-i}
\frac{\partial f_{2i}}{\partial a}
+
e^{a/2}\left(\frac{\partial f_4}{\partial y^i}+\frac{x^i}{2}\frac{\partial f_4}{\partial z}\right)
+\dfrac{1}{2}f_{2i}
=&0,\\
\label{(4,3)-i}
\frac{\partial f_3}{\partial a}
+
e^{a} \frac{\partial f_4}{\partial z}
+f_3
=&0,\\
\label{(4,4)-i}
2\frac{\partial f_4}{\partial a}
=&2\rho,
\end{align}
for $1\le i\le n-1$, and,
\begin{align}
\label{(i,j)-1}
e^{a/2}\left(\frac{\partial f_{1j}}{\partial x^i}
-\frac{y^i}{2}\frac{\partial f_{1j}}{\partial z}\right)
+
e^{a/2}\left(\frac{\partial f_{1i}}{\partial x^j}
-\frac{y^j}{2}\frac{\partial f_{1i}}{\partial z}\right)
=&0,\\
\label{(i,j)-2}
e^{a/2}\left(\frac{\partial f_{1j}}{\partial y^i}
+\frac{x^i}{2}\frac{\partial f_{1j}}{\partial z}\right)
+
e^{a/2}\left(\frac{\partial f_{2i}}{\partial y^j}
-\frac{x^j}{2}\frac{\partial f_{2i}}{\partial z}\right)
=&0,\\
\label{(i,j)-3}
e^{a/2}\left(\frac{\partial f_{1j}}{\partial y^i}
+\frac{x^i}{2}\frac{\partial f_{1j}}{\partial z}\right)
+
e^{a/2}\left(\frac{\partial f_{2i}}{\partial x^j}
-\frac{y^j}{2}\frac{\partial f_{2i}}{\partial z}\right)
=&0,\\
\label{(i,j)-4}
e^{a/2}\left(\frac{\partial f_{2j}}{\partial y^i}
+\frac{x^i}{2}\frac{\partial f_{2j}}{\partial z}\right)
+
e^{a/2}\left(\frac{\partial f_{2i}}{\partial y^j}
+\frac{x^j}{2}\frac{\partial f_{2i}}{\partial z}\right)
=&0,
\end{align}
for $1\le i\ne j\le n-1$.\\

It is important to note that the functions $f_{1i}, f_{2i} \; (1\le i\le n-1), f_3, f_4$ are functions of $2n$ variables;
however, by focusing on the four variables $(x^i, y^i, z, a)$,
we observe that equations \eqref{(1,1)-i} through \eqref{(4,4)-i} coincide exactly with the system of partial differential equations in the case of $\mathbb{C}H^2$.  
Therefore, by applying the same argument as in the proof of Theorem \ref{mainresult},
one can similarly show that $f_4$ is independent of the variable $a$,
and that the potential function $\rho$ vanishes.
From this, we obtain the following result:

\begin{thm}
There exist no nontrivial conformal vector fields on the complex hyperbolic space $\mathbb{C}H^n$ for any integer $n \geq 2$.
Hence, all conformal vector fields must be Killing. 
\end{thm}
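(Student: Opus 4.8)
The plan is to bootstrap the planar result of Section~\ref{mrp} to arbitrary $n$, using the observation that, for each fixed index $i$, the conformal Killing system on $\mathbb{C}H^n$ restricts to precisely the system already solved for $\mathbb{C}H^2$. First I would note that although the coefficients $f_{1i}, f_{2i}$ ($1\le i\le n-1$), $f_3$, $f_4$ depend on all $2n$ coordinates $(x^1,y^1,\dots,x^{n-1},y^{n-1},z,a)$, the subsystem \eqref{(1,1)-i}--\eqref{(4,4)-i} — once \eqref{(4,4)-i} is substituted into \eqref{(1,1)-i}, \eqref{(2,2)-i}, \eqref{(3,3)-i} exactly as in the passage from \eqref{(1,1)'} to \eqref{(4,3)'} — is formally identical to the $\mathbb{C}H^2$ system, with $(f_1,f_2)$ replaced by $(f_{1i},f_{2i})$, the variable $x$ by $x^i$ and $y$ by $y^i$, while $f_3, f_4$ play the same roles. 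The genuinely new relations \eqref{(i,j)-1}--\eqref{(i,j)-4} couple distinct indices but are not needed: the single-index subsystem already pins down $f_4$.

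Next I would rerun the machinery of Section~\ref{pre} and Theorem~\ref{mainresult} verbatim for this subsystem. Setting $w=e^a$, $F_3=wf_3$, $F_4=wf_4$, equations \eqref{(3,3)-i} and \eqref{(4,3)-i} give the Cauchy--Riemann system for $(F_3,F_4)$ in $(z,w)$; hence both are harmonic and real-analytic in $(z,w)$, with coefficients now depending smoothly on the $2(n-1)$ parameters $x^1,y^1,\dots,x^{n-1},y^{n-1}$ rather than just on $(x,y)$. Expanding $F_4$ in homogeneous harmonic polynomials in $(z,w)$ as in \eqref{f4simple}, and substituting into \eqref{(1,1)-i} and \eqref{(4,1)-i} — the computation that in the plane produced \eqref{16}--\eqref{16-3} — forces $C_4=0$, $C_2^{[m]}=0$ for all $m\ge1$, $C_1^{[m]}=0$ for all $m\ge3$, together with the degree bound on $C_1^{[1]},C_1^{[2]}$. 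As in the corollary to Theorem~\ref{mainresult}, this leaves
\[
f_4 = C_1^{[1]} + 2z\,C_1^{[2]},
\]
which is independent of $a$, so $\partial f_4/\partial a = 0$ and \eqref{(4,4)-i} gives $\rho\equiv 0$. Therefore $\mathcal{L}_\xi g = 0$, i.e. $\xi$ is Killing, which is the assertion of Theorem~\ref{thm:main} and of the present statement for general $n$.

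The step I expect to demand the most care is confirming that the single-index reduction is genuinely self-contained: one must check that nowhere in the $\mathbb{C}H^2$ proof was it essential that $f_1,f_2,f_3,f_4$ were functions of exactly four variables in a way that would fail once they carry extra parameters. Every operation used there — termwise differentiation of the harmonic expansion in $(z,w)$, separation of the coefficients of $\{z^{m-k}e^{ka}\}$ using linear independence of $\{e^{ka}\}_{k\in\mathbb{Z}}$, and comparison of coefficients of powers of $z$ — is performed with the auxiliary variables held fixed, and (as recorded in the Note following Theorem~\ref{mainresult}) the real-analyticity in $(z,w)$ holds uniformly on compact sets with smooth parameter dependence, so these manipulations remain valid for each $i$. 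With $\rho\equiv0$ established, the coupling equations \eqref{(i,j)-1}--\eqref{(i,j)-4} become conditions on a Killing field and are automatically consistent; this completes the proof for all $n\ge2$.
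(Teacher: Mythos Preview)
Your proposal is correct and follows essentially the same approach as the paper: both observe that for each fixed index $i$ the subsystem \eqref{(1,1)-i}--\eqref{(4,4)-i} coincides with the $\mathbb{C}H^2$ system in the variables $(x^i,y^i,z,a)$, so the proof of Theorem~\ref{mainresult} carries over verbatim to force $\partial f_4/\partial a=0$ and hence $\rho\equiv 0$. Your write-up is in fact more careful than the paper's, which simply asserts the reduction without spelling out (as you do) that the extra coordinates enter only as passive parameters and that the coupling equations \eqref{(i,j)-1}--\eqref{(i,j)-4} are never needed.
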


\section*{Appendix}

Nickerson’s Remark~2 in \cite{Nickerson1985} concerns conformal changes of the metric: he observes that if a metric $h$ is locally symmetric and conformal to a locally symmetric metric $g$,
then $h$ must differ from $g$ only by a constant factor, provided the manifold is not conformally flat and has non-zero scalar curvature.
This argument can be reformulated in terms of local flows generated by conformal vector fields, rather than conformal changes of the metric itself.
Interpreted in this setting, Nickerson’s observation yields the following theorem, which we record here for convenience.

\medskip

\begin{thm}\label{Nickerson}
Let $(M^n,g)$ be a Riemannian manifold of dimension $n\ge4$ satisfying
(i) $(M,g)$ is locally symmetric, i.e.\ $\nabla R=0$;
(ii) $(M,g)$ is not conformally flat, i.e.\ $W\not\equiv0$; and
(iii) the scalar curvature $\mathrm{Scal}_g$ is nonzero (and hence constant, by local symmetry).
Then every conformal vector field $\xi$ on $(M,g)$ is Killing.
\end{thm}

\begin{proof}
Let $\xi$ be a conformal vector field, i.e. $\mathcal{L}_\xi g = 2\rho g$ for some smooth $\rho$. 
Let $\Phi : D\to M$ be the local flow of $\xi$, with $D\subset\mathbb{R}\times M$ open, and fix an arbitrary point $p\in M$.
Choose a neighborhood $U\ni p$ and $\varepsilon>0$ such that for all $t$ with $|t|<\varepsilon$ and $(t,q)\in D$ for every $q\in U$, the map
$$
\phi_t:=\Phi(t,\cdot): (U,g)\longrightarrow \bigl(\phi_t(U),g\bigr)
$$
is a conformal diffeomorphism onto its image. Set, on $U$,
$$
h \;:=\; \bigl(\phi_t|_U\bigr)^{*}g \;=\; e^{2\psi_t}\,g
$$
for some smooth function $\psi_t:U\to\mathbb{R}$ depending smoothly on $t$.
By definition of pullback, $\phi_t:(U,h)\to\bigl(\phi_t(U),g\bigr)$ is an isometry; hence $(U,h)$ is also locally symmetric. 
Since $W\not\equiv 0$ on $(U,g)$ and $n\ge 4$, the metrics $g$ and $h$ are two locally symmetric metrics on the same domain $U$ that are pointwise conformally related.

By Nickerson \cite[Remark~2]{Nickerson1985}, on a locally symmetric manifold with nonvanishing Weyl tensor there is no non-homothetic conformal relation between locally symmetric metrics. 
Therefore there exists a constant $c_t\in\mathbb{R}$ (depending on $t$ but constant on $U$) such that
$$
h \;=\; e^{2c_t}\,g \quad\text{on $U$}.
$$
Now $\mathrm{Scal}_h = e^{-2c_t}\,\mathrm{Scal}_g$ for a homothety, whereas $\mathrm{Scal}_h = \mathrm{Scal}_g\circ \phi_t$ for a pullback by $\phi_t$; since $\mathrm{Scal}_g$ is a nonzero constant by local symmetry, we must have $e^{-2c_t}=1$, hence $c_t=0$. 
Thus $(\phi_t|_U)^*g = g$ for all sufficiently small $t$, and differentiating at $t=0$ yields $\mathcal{L}_\xi g=0$ on $U$. 
As $p$ was arbitrary, $\mathcal{L}_\xi g=0$ on $M$, i.e. $\xi$ is Killing.
\end{proof}

\vspace{0.1in}

\begin{flushleft}
Hiroyasu Satoh\\
Liberal Arts and Sciences\\
Nippon Institute of Technology\\
4-1 Gakuendai, Miyashiro-machi, Saitama 345-8501, Japan.\\
E-mail: hiroyasu@nit.ac.jp
\end{flushleft}

\begin{flushleft}
Hemangi  Madhusudan Shah\\
	Harish-Chandra Research Institute\\ 
	A CI of Homi Bhabha National Institute\\ 
	Chhatnag Road, Jhunsi, Prayagraj-211019, India.\\	
	E-mail:	hemangimshah@hri.res.in
\end{flushleft}

\end{document}